\newcommand{\ninsepsc}[3]{
\begin{figure}[h]

\begin{center}
 \scalebox{#3}{\includegraphics{#1}}
\end{center}

\vspace{-0.8cm}
\caption{\hspace{0.25cm}#2\label{f:#1}}
\end{figure}
}
\newtheorem{lemma}{Lemma}
\newtheorem{remark}{Remark}
\newtheorem{proposition}{Proposition}[section]
\begin{document}
\title{Approximation of Excessive Backlog Probabilities of Two Tandem Queues}
\author{Ali Devin Sezer\footnote{Middle East Technical University, Institute of Applied Mathematics, Ankara, Turkey}}
\maketitle
\begin{abstract}
Let $X$ be the constrained random walk on ${\mathbb Z}_+^2$ 
with increments $(1,0)$, $(-1,1)$ and $(0,-1)$; $X$ represents the lengths of two queues
in tandem where arrivals are Poisson to the first queue with rate $\lambda$,
and the service times are exponentially distributed with rates $\mu_1$ and $\mu_2$;
we assume $\lambda < \mu_1, \mu_2$, i.e., $X$ is assumed stable
and $\mu_1 \neq \mu_2$ (the case $\mu_1 = \mu_2$ can be handled
by allowing $\mu_1$ converge to $\mu_2$). 
 Let $\tau_n$ be the first
time $X$ hits the line $\partial A_n = \{x:x(1)+x(2) = n \}$, i.e.,  when the sum of the components of $X$ equals $n$ for the first time.
For $x \in {\mathbb Z}_+^2, x(1) + x(2) < n$, the probability $p_n(x)= P_x( \tau_n < \tau_0)$ 
is one of the key performance
measures for the queueing system represented by $X$  (if the queues share a common buffer,
$p_n(x)$ is the probability that this buffer
overflows during the system's first busy cycle).
Let $Y$ be the random walk on ${\mathbb Z} \times {\mathbb Z}_+$ with increments $(-1,0)$, $(1,1)$ and $(0,-1)$ 
that is constrained to be positive only on its second component. Let $\tau$ be the first time that the components
of $Y$ equal each other. Let $\rho_i = \lambda/\mu_i$, $i=1,2$, denote the utilization rates of the nodes.
We derive the following explicit formula for $P_y(\tau < \infty)$:
\[
P_y(\tau < \infty) = W(y)= \rho_2^{y(1)-y(2)} + \frac{\mu_2 - \lambda}{\mu_2 - \mu_1} 
\rho_1^{ y(1)-y(2)} \rho_1^{y(2)}
+ \frac{\mu_2-\lambda}{\mu_1 -\mu_2} \rho_2^{y(1)-y(2)} \rho_1^{y(2)},
\]
$y \in {\mathbb Z}\times{ \mathbb Z}_+$,
$y(1) > y(2)$,
and show that $W(n-x_n(1),x_n(2))$
approximates $p_n(x_n)$ with relative error {\em exponentially decaying} in $n$
for
$x_n = \lfloor nx \rfloor$, $x \in {\mathbb R}_+^2$,
$0 < x(1) + x(2) < 1$.
Our analysis consists of the following steps: 1) with an affine transformation,
move the origin of the coordinate system to the point $(n,0)$ on the exit boundary $\partial A_n$;
let $n\rightarrow \infty$ to remove the constraint on the $x(2)$ axis.
; this step gives the limit {\em unstable} /{\em transient} 
constrained random walk $Y$ that is constrained only on the $x(1)$ axis,
and reduces $P_{x}(\tau_n < \tau_0)$ to $P_y(\tau < \infty)$.
2) construct a basis of harmonic functions of $Y$ and
use this basis to apply the classical superposition principle of linear analysis
to compute $P_y(\tau < \infty).$
The construction of basis functions involve the use of conjugate points on a
characteristic surface associated with the walk $X$.
The  proof that the relative error decays exponentially in $n$
uses a sequence of subsolutions of a related Hamilton Jacobi Bellman 
equation on a manifold; the manifold consists
of three copies of ${\mathbb R}_+^2$, the zeroth
glued to the first along $\{x:x(1)=0\}$ and the first to the second
along $\{x:x(2) =0\}.$
We indicate how the ideas of the paper can be generalized
to more general processes and other exit boundaries.

~\\
\noindent
{\bf MSC classes:} 60G50, 60G40, 60F10, 60J45,
{\bf Keywords:} Large deviations, constrained random walks, buffer overlow, queueing systems, exit times, harmonic systems
\end{abstract}


\section{Introduction and definitions}\label{s:intro}
Let $X$ be a random walk
with independent and identically distributed increments 
$\{I_1,I_2,I_3,...\}$,
constrained to remain in ${\mathbb Z}_+^2$:
\begin{align*}
X_0 &=x \in {\mathbb Z}_+^2, ~~~X_{k+1} \doteq X_k + \pi ( X_k,I_k ), k=1,2,3,...\\
\pi(x,v) &\doteq \begin{cases} v, &\text{ if } x +v  \in {\mathbb Z}^2_+ \\
				  0      , &\text{otherwise,}
\end{cases}, 
\end{align*}
\[
I_k \in \{(1,0),(-1,1),(0,-1) \}, 
P(I_k = (1,0)) = \lambda,
P(I_k = (-1,1)) = \mu_1,
P(I_k = (0,-1)) = \mu_2.
\]
Let $\partial_i \doteq \{ x \in {\mathbb Z}^2: x(i) = 0 \}$, $i=1,2$,
denote the constraining boundaries of the process and let $\sigma_i \doteq \inf \{k: X_k \in \partial_i\}$, $i=1,2$,
denote the first time $X$ hits these boundaries.
The components of $X$ represents
the number of customers at jump times of a Jackson network consisting of two 
tandem queues.

We assume that $X$ is stable, i.e., $\lambda  <\mu_1,\mu_2$.
We also assume $\mu_1 \neq \mu_2$;  subsection \ref{ss:equal}
comments on the case $\mu_1 = \mu_2$.
Define
\begin{equation}\label{e:defA}
A_n = \left\{ x \in {\mathbb Z}_+^2: x(1) + x(2) \le n \right\}
\end{equation}
\index{$A_n$}
and its boundary
\begin{equation}\label{e:defBoundaryA}
\partial A_n =
  \left\{ x \in {\mathbb Z}_+^2: x(1) + x(2) = n \right\}.
\end{equation}
Let $\tau_n$ be the first time $X$ hits $\partial A_n$:
\begin{equation}\label{d:taun}
\tau_n \doteq \inf\{k: X_k \in \partial A_n\}.
\end{equation}
Define
$p_n \doteq P_x( \tau_n < \tau_0)$, i.e., 
the probability that, starting from an initial state $x \in A_n$,
the total number of
customers in the system reaches $n$ before the system empties. 
The set $A_n$
models a systemwide shared buffer of size $n$.
If we measure
time in  the number of independent cycles that restart each time $X$ hits $0$, $p_n$ is 
the probability that the current cycle finishes successfully (i.e., without a buffer
overflow). 
One can change the domain $A_n$ to model other buffer structures, e.g.,
$ \{ x \in {\mathbb Z}_+^2: x(i) \le n, i=1,2\}$
models separate buffers of size $n$ for each queue in the system. 
The present work focuses on 
the domain $A_n$. The basic ideas of the paper
apply to other domains, and we comment on this in Section \ref{s:conclusion}.
Let $Y$ be the random walk on ${\mathbb Z} \times {\mathbb Z}_+$ with increments $(-1,0)$, $(1,1)$ and $(0,-1)$ 
that is constrained to be positive only on its second component. Let $\tau$ be the first time that the components
of $Y$ equal each other (the relation between $X$ and $Y$ is explained in the paragraphs below).
In Section \ref{s:twodim} we
derive the following explicit formula for $P_y(\tau < \infty)$:
\begin{equation}\label{d:Wstar}
P_y(\tau < \infty) = W^*(y)
\doteq 
\left( \rho_2^{y(1)-y(2)} 
- \frac{\mu_2-\lambda}{\mu_2 -\mu_1} \rho_2^{y(1)-y(2)} \rho_1^{y(2)}\right)
+ \frac{\mu_2 - \lambda}{\mu_2 - \mu_1} 
\rho_1^{y(1)-y(2)} \rho_1^{y(2)}, ~~
\end{equation}
$y \in {\mathbb Z}^2_+$, $y(1) > y(2).$
Fix $x \in \{x \in {\mathbb R}_+^2: 0 < x(1) + x(2) < 1\}$ and define
$x_n = \lfloor nx \rfloor$.
In Section \ref{s:convergence2} we show that $W^*(n-x_n(1),x_n(2))$ approximates
$p_n(x_n)$, 
with relative error {\em exponentially vanishing } in $n$ (see Proposition \ref{t:guzel}).
The following paragraphs note prior literature and results relating
to the approximation of $p_n$
and summarize the analysis that lead to the results summarized above.

For a stable $X$, the event $\{ \tau_n <\tau_0\}$ rarely happens
and, conditioned on a fixed initial point $x$, its probability $p_n$ decays 
exponentially with buffer size $n$. Because $X$ is Markov,  $p_n$, as a
function of the initial point $x$, satisfies a system of linear equations, see
\eqref{e:linear}. As $n$ gets large, the number of unknowns grow like
$n^2$ and 
it becomes infeasable to solve the system exactly. 
\cite{GlassKou, ignatiouk2000large} compute
the large deviation limit of $p_n(x)$, for $x=(1,0)$, as
\[
\lim_{n\rightarrow \infty} -\frac{1}{n}\log p_n( (1,0)) = \min(-\log \rho_1,-\log \rho_2),
\]
where $\rho_i = \lambda/\mu_i.$
Because $p_n$ is a small probability, i.e., the probability of a rare event, a natural
idea is to use importance sampling to approximate it via simulation.
To the best of our knowledge, the article \cite{parekh1989quick}
is the first to study the optimal IS simulation of the two tandem walk model for the boundary
$\partial A_n$; it was observed in \cite{parekh1989quick} that static changes of measure
implied by optimal large deviation sample paths may not lead to optimal IS changes of measure because
of the constraining boundaries of the process. \cite{parekh1989quick} introduced boundary layers to the
problem and allowed the change of measure to depend on whether the 
process is in these layers.
It was observed in \cite{GlassKou} that a simple change of measure implied by LD analysis
(exchange the arrival rate with the smaller of the service rates)
could perform poorly for the exit boundary $\partial A_n =  \{x: x(1)+ x(2) = n \}$ for a range
of parameter values.
An asymptotically optimal change of measure for this boundary was developed in \cite{DSW} using subsolution
of a limit Hamilton Jacobi Bellman (HJB) equation; similar to
the heuristic constructions in \cite{parekh1989quick},
the change of measure developed in \cite{DSW} is dynamic, 
i.e., it depends on the position
of the process $X$; 
\cite{istrees,sezer-asymptotically,yeniDW,KDW} treats higher dimensions, 
more general
dynamics and different exit boundaries using the subsolution apprach. Let ${\bm \tau}_{\bm 0}$ denote
the first return time to the origin.
The work \cite{mcdonald1999asymptotics} proposes an alternative approximation approach to probabilities of
the type $P_{\bm 0}(\tau_n < {\bm \tau}_{\bm 0})$ for a class of models under a number of assumptions; 
the approximation idea in \cite{mcdonald1999asymptotics} 
is to replace $\tau_0$ with $\tau_\triangle$, and the initial position ${\bm 0}$ with a random initial
point on $\triangle$ with distribution $\pi_\triangle$, where $\triangle$ are the constraining boundaries
corresponding to a set of ``non-super-stable'' nodes, $\tau_\triangle$ is the first nonzero time when one these nodes
become empty, and $\pi_\triangle$ is the stationary measure of the underlying
process conditioned on $\triangle$; \cite{mcdonald1999asymptotics} and its approach are further reviewed in Section \ref{s:review}.
There is a vast literature on the analysis and simulation of rare events of constrained
random walks, in particular, and on
the analysis of constrained random walks in general
 \cite{
alanyali1998large,
aldous2013probability, 
anantharam, 
asmussen2007stochastic, 
asmussen2008applied, 
atar1999large,  
blanchet2006efficient, 
blanchet2009lyapunov,
blanchet2009rare, 
blanchet2013optimal, 
BoerNicola02, 
borovkov2001large,
Changetal, 
collingwood2011networks, 
comets2007distributed, 
comets2009large, 
dai2011reflecting, 
de2006analysis, 
dean2009splitting,
dieker2005asymptotically,
duphui-is1, 
dupuis1995large,
flajolet1986evolution, 
foley2005large, 
foley2012constructing, 
frater1991optimally, 
guillotin2006dynamic, 
Iglehart74, 
ignatiouk2010martin, 
ignatyuk1994boundary,
juneja2006rare, 
JunejaNicola,
JunejaRandhawa, 
KDW, 
kobayashi2013revisiting, 
KroeseNicola,
kurkova1998martin, 
louchard1991probabilistic, 
louchard1994random, 
maier1991colliding, 
maier1993large,
mcdonald1999asymptotics, 
miretskiy2008state, 
miyazawa2009tail, 
miyazawa2011light, 
MR1110990, 
MR1335456,
MR1736592, 
ney1987markov,
nicola2007efficient, 
parekh1989quick, 
ridder2009importance, 
Rubetal04,
rubino2009rare, 
sezer2009importance, 
sezer-asymptotically, 
yao1981analysis, 
yeniDW}.
Section \ref{s:review} reviews a number of these works
in relation to the results and the techniques of the current work.

One way to think about the LD analysis is as follows. 
$p_n$ itself
decays to $0$, which is trivial. To get a nontrivial limit 
transform $p_n$ to $V_n \doteq -\frac{1}{n}\log p_n$;
using convex duality, one can write
the $-\log$ of an expectation as an optimization
problem involving the relative entropy \cite{dupell2} and thus
$V_n$ can be interpreted 
as the value function of a discrete time stochastic optimal control 
problem. 
The LD analysis
consists of the law of large numbers limit analysis of this control problem;
the limit problem is a deterministic optimal control problem whose value function
satisfies a first order Hamilton Jacobi Bellman equation (see
 \eqref{e:HJB0} of Section \ref{s:convergence2}).
Thus, LD analysis amounts to the computation of the
limit of a {\em convex  transformation} of the problem.

We will use another, an {\em affine}, transformation of $X$ for the limit analysis.
The proposed transformation is very simple:  
{\em observe $X$ from the exit boundary.}
For the two tandem walk 
the most natural vantage point
on the exit boundary $\partial A_n$ turns out to be the corner $(n,0).$
Therefore, we transform the process thus
\begin{equation}\label{e:defTn}
Y^n \doteq T_{n}(X), ~~~ T_{n}: {\mathbb R}^2\rightarrow {\mathbb R}^2, 
T_{n}(x) \doteq y, ~~~, y(2) = x(2), y(1) = n-x(1).
\end{equation}
\index{T@$T_n$ the affine transformation mapping $X$ to $Y^n$}
$T_{n}$ is affine and its inverse equals itself.
$Y^n$, i.e., the process $X$ as observed from the corner $(n,0)$,
 is a constrained process on the domain 
$\Omega_Y^n\doteq  (n - {\mathbb Z}_+) \times {\mathbb Z}_+$.
$T_{n}$ maps
the set $A_n$ to
$B_n \subset \Omega_Y^n$, $B_n \doteq T_{n} (A_n)$,
the corner $(n,0)$
to the origin of $\Omega_Y^n$; 
the exit boundary  $\partial A_n$
to 
$\partial B_n \doteq \{ y \in \Omega_Y^n, y(1)=y(2)\}$ and
finally the constraining
boundary $\{x \in {\mathbb Z}_+^2, x(1)=0\}$ to
\[
\{ y \in {\mathbb Z}_+^2: y(i) = n \}.
\]
As $n\rightarrow \infty$ 
the last boundary vanishes and $Y^n$ converges to
the 
limit process $Y$ on the domain
$\Omega_Y \doteq  {\mathbb Z} \times {\mathbb Z}_+$
and
the set $B_n$ to 
\begin{equation}\label{e:defB}
B \doteq \left\{ y \in \Omega_Y, y(1) \ge y(2)\right\}.
\end{equation}
The exit boundary for the limit problem is
\begin{equation}\label{e:defBb}
\partial B = \left\{ y \in \Omega_Y, y(1) = y(2) \right\};
\end{equation}
the limit stopping time
\begin{equation}\label{e:deftau}
\tau \doteq \inf \{k: Y_k \in \partial B\}
\end{equation}
is the first time $Y$ hits $\partial B$.
{\em 
The stability of $X$ and the vanishing of the boundary constraint on $\partial_1$ implies
that $Y$ is {\em unstable} / {\em transient}, i.e., with probability $1$ it wanders off to $\infty$}.
Therefore, in our formulation, the limit process is an {\em unstable} constrained random walk
in the same space and time scale as the original process
but with less number of constraints and the limit problem
is whether this unstable process ever hits the fixed boundary $\partial B$.

Figure \ref{f:Tn2tex} sketches these transformations.

\begin{figure}[h]
\begin{center}
\scalebox{0.7}{
\centerline{\input{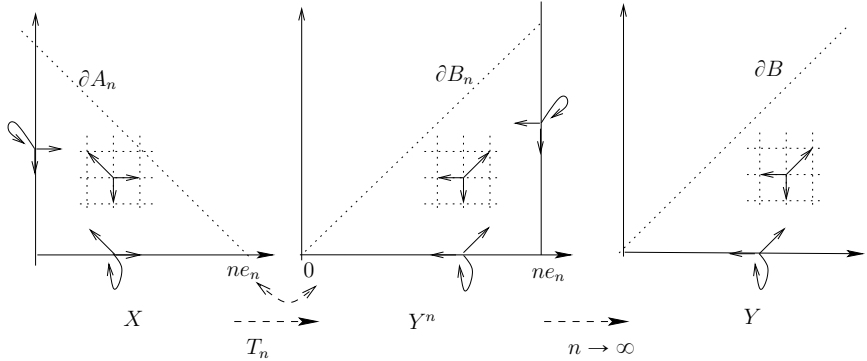}}}
\end{center}

\vspace{-0.65cm}
\caption{\hspace{0.25cm}The transformation $T_n$\label{f:Tn2tex}}
\end{figure}

Fix an initial point $y \in B$ in the new coordinates; our first
convergence result is Proposition \ref{t:c1} which says
\begin{equation}\label{e:conv1}
p_n = P_{x_n}( \tau_n < \tau_0 )
\rightarrow  P_y( \tau < \infty),
\end{equation}
where $x_n = T_n(y)$.
The proof uses the
law of large numbers and LD lowerbounds to show that the difference between the two
sides of \eqref{e:conv1} vanishes with $n$.
With \eqref{e:conv1} we see that 
the limit problem in our formulation is to compute the hitting probability of
the unstable $Y$ to 
the boundary $\partial B$.

The convergence statement \eqref{e:conv1} involves a fixed initial condition for the
process $Y$. In classical LD analysis, one specifies the initial point
in scaled coordinates as follows: 
$x_n = \lfloor n x \rfloor \in A_n $ for $x \in {\mathbb R}_+^d$.
Then the initial
condition for the $Y^n$ process will be
$y_n = T_n(x_n)$
(thus we fix not the $y$ coordinate but the
scaled $x$ coordinate). 
When $x_n$ is defined in this way, \eqref{e:conv1}
becomes a trivial statement because its both sides decay to $0$.
For this reason, Section \ref{s:convergence2} studies
the relative error
\begin{equation}\label{e:conv2}
\frac{| P_{x_n}( \tau_n < \tau_0 ) - P_{y_n}( \tau < \infty)|}{P_{x_n}(\tau_n < \tau_0)};
\end{equation}
Proposition \ref{t:guzel} says that this error converges exponentially to $0$
for the case of two dimensional tandem walk (i.e., the process $X$ shown in
Figure \ref{f:Tn2tex}).
The proof rests on
showing that the probability of the intersection of the events 
$\{\tau_n < \tau_0\}$ and $\{ \tau < \infty\}$ 
dominate the probabilities of both as $n\rightarrow \infty$. 
For this we calculate bounds in Proposition \ref{t:thm1}
on the LD decay rates of the probability
of the differences between these events using
a sequence of subsolutions of a Hamilton Jacobi Bellman equation {\em on a manifold};
the manifold consists of three copies of ${\mathbb R}_+^2$,
zeroth copy glued to the first along $\partial_1$, and the first to the second
along $\partial_2$, where $\partial_i = \{x\in {\mathbb R}_+^2: x(i) = 0 \}.$
Extension of this argument to more complex processes and
domains remains for future work.

The convergence results
\eqref{e:conv1} and \eqref{e:conv2} reduce the problem of calculation
of $P_x(\tau_n < \tau_0)$ to that of $P_y(\tau < \infty)$. 
This constitutes the first
step of our analysis and we expect it to apply more generally;
see subsection \ref{ss:possiblegen}. 

Computation of $P_y(\tau < \infty)$ 
is a static linear problem and can be attacked
with a range of ideas and methods.
Section \ref{s:twodim}
applies the {\em principle of superposition}
of classical linear analysis to the computation of $P_y(\tau < \infty)$.
The key for its application is to construct
the right class of efficiently computable basis functions to be superposed.
The construction of our basis functions goes as follows:
the distribution
of the increments of $Y$ is used to define the 
{\em characteristic polynomial}
${\bm p}:{\mathbb C}^2 \rightarrow {\mathbb C}$.
${\bm p}$ can be represented both as a rational
function and as a polynomial.
We call the $1$ level set of ${\bm p}$, the {\em characteristic surface} of $Y$
and denote it with ${\mathcal H}$, see \eqref{d:DefHcal}.
${\mathcal H}$ is, more precisely, a $1$ dimensional complex affine algebraic variety of degree $3$.
Each point on the characteristic surface ${\mathcal H}$
defines a $\log$-linear function (see Proposition \ref{p:basicharmonicZ})
that satisfies the interior harmonicity condition of $Y$ 
(i.e., defines a harmonic function of the completely unconstrained version 
of $Y$); 
similarly, each boundary of the state
space of $Y$ has an associated characteristic polynomial and surface.
${\bm p}$ can be written as a second order polynomial in each of its arguments;
this implies that most points on ${\mathcal H}$ come in conjugate pairs.
The keystone of the approach developed in Section \ref{s:twodim} is the following
observation:
{\em $\log$-linear functions defined by two points 
on ${\mathcal H}$ satisfying a given type of conjugacy relation
can be linearly combined to get nontrivial functions which satisfy
the corresponding boundary harmonicity condition
(as well as the interior one)}; see Figure \ref{f:charsurftex} and Proposition \ref{p:harmonicYtwoterms2d}.
We show that any solution to a
harmonic system gives a harmonic function for $Y$ in the form of linear combinations
of $\log$-linear functions (each vertex defines a $\log$-linear function).

There is a direct connection between the computations given
in the present paper and the Balayage operator \cite{revuz1984markov},
we point out this connection in subsection \ref{ss:partialBdet},
 Remark \ref{r:balayage}.
Section \ref{s:examples} gives a numerical example. 
The conclusion (Section \ref{s:conclusion}) discusses several directions 
for future research. Among these is the application of the approach of 
the present paper to constrained diffusion processes and the associated 
elliptic equations with
Neumann boundary conditions (subsection \ref{ss:diffusionswithdrift}).

\section{Derivation of the limit problem}\label{s:convergence1}
This section derives  the limit problem resulting from the affine transformation $T_n$.
The derivation is simple enough and therefore will be given for a more general 
setup: for the purposes of the present section
we will assume $X$ to be the embedded random walk of a $d$ 
dimensional stable Jackson network; let, as before, $I_k$ denote the unconstrained iid increments of $X$.
Define
\begin{equation}\label{e:DefIi}
{\mathcal I}_1 \in {\mathbb R}^{d \times d},~~
{\mathcal I}_1(j,k) = 0, j \neq k,~~ {\mathcal I}_1(j,j) = 1, j \neq 1,~~
{\mathcal I}_1(1,1) = -1.
\end{equation}
${\mathcal I}_1$ is the identity operator on ${\mathbb R}^d$ except that its first diagonal term
is $-1$ rather than $1$. The affine change of coordinate map will be
\begin{equation}\label{e:explicitTn}
T_n = n e_1 + {\mathcal I}_1,
\end{equation}
where $e_1 \doteq (1,0,0,...,0) \in {\mathbb R}^d.$
Define the sequence of transformed increments
\begin{equation}\label{e:defJ}
J_k \doteq {\mathcal I}_1 (I_k).
\end{equation}
The domain of the limit $Y$ process will be $\Omega_Y = {\mathbb Z} \times {\mathbb Z}_+^{d-1}$
and the limit process will have dynamics
\[
Y_{k+1} = Y_k + \pi_1(Y_k,J_k),
\]
where
\[
\pi_1(x,v) \doteq 
\begin{cases} v, &\text{ if } x +v  \in \Omega_Y,\\
	 0    , &\text{otherwise.}
\end{cases}
\]
Let
$A_n=  \{x \in {\mathbb Z}_+^d: x(1) + x(2) + \cdots + x(d) \le n \}$, $\tau_n$ is the first time $X$ hits 
$\partial A_n = \{x \in {\mathbb Z}_+^d: x(1) + x(2) + \cdots + x(d) = n \}$.
The limit exit boundary will
be $\partial B = \{y \in \Omega_Y, y(1) \ge \sum_{i=2}^d y(i) \}$,
$\tau$ is the first time $Y$ hits $\partial B.$ Set $\partial_1 = \{z \in {\mathbb Z}^d: z(1) = 0 \}$
and $\sigma_1$ will be the first time $X$ hits $\partial_1.$

Denote by ${\mathcal X}$ the law of large numbers limit of $X$ , i.e., the deterministic
function which satisfies
\begin{equation}\label{e:lln}
\lim_n P_{x_n}\left ( \sup_{k \le t_0 n} |X_k/n -  {\mathcal X}_{k/n} | > \delta \right) = 0
\end{equation}
for any $\delta > 0$ and $t_0 > 0$
where  $x_n \in {\mathbb Z}_+^d$ is a sequence of initial positions
satisfying $\frac{x_n}{n} \rightarrow \chi \in {\mathbb R}_+^d$ 
(see, e.g., \cite[Proposition 9.5]{robert2003stochastic} or 
\cite[Theorem 7.23]{chen2013fundamentals}).
The limit process starts from
${\mathcal X}_0 = \chi$,  is piecewise affine and takes values in ${\mathbb R}_+^d$;
then
 $s_t \doteq \sum_{i=1}^d {\mathcal X}_t(i)$ starts from $\sum_i \chi(i)$
is also piecewise linear and continuous (and therefore differentiable except
for a finite number of points) with 
values in ${\mathbb R}_+$. The stability and bounded iid increments of
$X$ imply that $s$ is strictly decreasing 
and 
\begin{equation}\label{e:sdecrease}
 c_1 > -\dot{s} > c_0 > 0
\end{equation}
for two constants $c_1$ and $c_0$.
These imply that ${\mathcal X}$ goes in finite time  $t_1$ to
$ 0 \in {\mathbb R}_+^d$ and remains there afterward.

Fix an initial point $y \in \Omega_Y$ for the process $Y$ and set
$x_n = T_n(y)$; it follows from the definition of $T_n$ that
\begin{equation}\label{e:llninit}
\frac{x_n}{n} \rightarrow e_1 \doteq (1,0,0,....,0) \in {\mathbb R}^d.
\end{equation}

\begin{proposition}\label{t:c1}
Let $y$ and $x_n$ be as above. Then
\[
\lim_{n\rightarrow \infty} P_{x_n}(\tau_n < \tau_0) =  P_y(\tau < \infty ).
\]
\end{proposition}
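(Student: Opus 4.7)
The plan is to couple $Y$ and $Y^n$ on one probability space with a common driving sequence, rewrite both target probabilities in terms of a single ``before the added constraint ever binds'' event, and then kill the remaining tails using the law of large numbers for $Y$ and a uniform large-deviations upper bound on $P_x(\tau_n<\tau_0)$ for $x\in\partial_1$ well inside $A_n$.

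Drive $Y$ and $Y^n$ by the common iid sequence $J_k=\mathcal I_1(I_k)$, started from $y$. The only constraint of $Y^n$ that is absent in $Y$ is $y(1)\le n$, and $Y(1)$ moves by at most one per step, so setting $\nu_n\doteq\inf\{k:Y_k(1)=n\}$ one has $Y^n_k=Y_k$ for every $k\le\nu_n$. Through $T_n$, the event $\{\tau_n<\tau_0\}$ under $P_{x_n}$ becomes $A\doteq\{Y^n\text{ hits }\partial B\text{ before }ne_1\}$ under $P_y$, while $\{\tau<\infty\}$ becomes $B\doteq\{\tau_B<\infty\}$ with $\tau_B\doteq\inf\{k:Y_k\in\partial B\}$. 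On $\{\tau_B<\nu_n\}$ the two walks coincide through time $\tau_B$, so $Y^n$ reaches $\partial B$ strictly before its constraint at $y(1)=n$ ever activates, hence strictly before $ne_1$; therefore $\{\tau_B<\nu_n\}\subseteq A\cap B$. Writing $p_n=P_y(A)$ and $p=P_y(B)$,
\[
p-P_y(\tau_B<\nu_n)=P_y(\nu_n\le\tau_B<\infty),\qquad p_n-P_y(\tau_B<\nu_n)=P_y(A\cap\{\tau_B\ge\nu_n\}),
\]
and the proposition reduces to showing that both right-hand sides vanish as $n\to\infty$.

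The first is immediate: $\nu_n\ge n-y(1)\to\infty$ deterministically, so dominated convergence applied to $1_{\{\nu_n\le\tau_B<\infty\}}$ yields $P_y(\nu_n\le\tau_B<\infty)\to 0$. For the second, the strong Markov property of $Y^n$ at $\nu_n$ combined with the coupling identity $Y^n_{\nu_n}=Y_{\nu_n}$ gives
\[
P_y(A\cap\{\tau_B\ge\nu_n\})\le E_y\bigl[\,1_{\{\nu_n<\infty\}}\,P_{\hat x_n}(\tau_n<\tau_0)\,\bigr],
\]
where $\hat x_n\doteq(0,Y_{\nu_n}(2),\ldots,Y_{\nu_n}(d))\in\partial_1$ is the $T_n$-image of $Y^n_{\nu_n}$. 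The LLN for the one-dimensional walk $Y(1)$ with drift $\mu_1-\lambda>0$ gives $\nu_n/n\to 1/(\mu_1-\lambda)$, and \eqref{e:lln} applied componentwise to the remaining coordinates of $Y$ pins $|\hat x_n|/n$ below a deterministic constant $c^\ast<1$ with probability tending to one; the strict inequality $c^\ast<1$ is exactly where the full stability hypothesis $\lambda<\mu_i$ for every $i$ enters.

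The main obstacle, and the only step requiring genuine work beyond bookkeeping, is a uniform LD upper bound of the form $\sup_{x\in\partial_1,\,|x|\le(c^\ast+\varepsilon)n}P_x(\tau_n<\tau_0)\le C e^{-rn}$ for some $r>0$. For $d=2$ this is contained in the results of \cite{GlassKou,ignatiouk2000large}; in the general stable Jackson setting it can either be extracted from the LD literature for constrained random walks or built from scratch by a Lyapunov / subsolution argument of the same flavor as the one set up in Section~\ref{s:convergence2}. Feeding this uniform bound into the estimate above, the strong-Markov expectation is dominated by $C e^{-rn}+P_y(|\hat x_n|/n>c^\ast+\varepsilon)\to 0$, which closes the proof.
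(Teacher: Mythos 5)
Your decomposition is sound and its skeleton matches the paper's: both proofs isolate the common event ``the walk reaches the exit boundary before the constraint at $\partial_1$ ever binds'' (your $\{\tau_B<\nu_n\}$, the paper's $\{M^X_{\tau_n}>0\}$), observe that on this event the coupled processes agree so the two probabilities share that piece, kill the $P_y(\tau<\infty)$-residual by monotone/dominated convergence, and are then left with exactly the event $\{\sigma_1<\tau_n<\tau_0\}$. The divergence is in how that last residual is estimated. You apply the strong Markov property at the random time $\sigma_1=\nu_n$ and then need the uniform large-deviations bound $\sup\{P_x(\tau_n<\tau_0): x\in\partial_1,\ |x|\le(c^*+\varepsilon)n\}\le Ce^{-rn}$ over a \emph{macroscopic} portion of $\partial_1$. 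The paper instead waits until the deterministic time $nt^0$ at which the fluid limit started from $e_1$ has drained the entire system; on the complement of the bad LLN event of \eqref{e:lln} the path has not yet reached $\partial A_n$ by then (this uses the strict decrease \eqref{e:sdecrease} of the total fluid population) and sits within $n\delta$ of the origin, so the Markov property at $nt^0$ only requires exponential decay of $P_x(\tau_n<\tau_0)$ for $|x|\le n\delta$ with $\delta$ arbitrarily small. That is a strictly weaker input, and it is the one the paper can actually cite from \cite[Theorem 2.3]{GlassKou}.

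The one real issue is therefore the lemma you yourself label the ``main obstacle'': the uniform bound over $x\in\partial_1$, $|x|\le(c^*+\varepsilon)n$ is not contained in \cite{GlassKou} in that form (the results there concern fixed or near-origin initial conditions), and for a general stable Jackson network it is a nontrivial statement whose proof you only sketch (Lyapunov/subsolution). The bound is true — for the tandem case one can read the positivity of the rate off Proposition \ref{p:ldfortandem}, uniformly for $|x|/n$ bounded away from the exit face — but as written your proof is incomplete at precisely this point, and the incompleteness is avoidable: replacing the strong Markov step at $\sigma_1$ by the paper's Markov step at the deterministic fluid-emptying time $nt^0$ removes the need for the strong uniform bound altogether. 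I would also note that your identification of $c^*<1$ via the fluid limit is where stability of \emph{all} nodes enters, exactly as in the paper, so that part is fine.
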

\begin{proof}
Note that $x_n \in A_n$ for $n > y(1)$.
Define
\[
M_k = \max_{l \le k } Y_l(i), ~~M^X_k = \min_{l \le k } X_l(i).
\]
$M$ is an increasing process and $M_\tau$ is the greatest that the first
component of $Y$ gets before hitting $\partial B$ (if this happens in finite time).
The monotone convergence theorem implies
\[
P_y( \tau < \infty) = \lim_{n\nearrow \infty} 
P_y( \tau < \infty, M_\tau <  n ).
\]
Thus 
\begin{equation}\label{e:decompose1}
P_y( \tau < \infty) = 
P_y( \tau < \infty, M_\tau < n )
+
P_y( \tau < \infty, M_\tau \ge n )
\end{equation}
and the second term goes to $0$ with $n$.
Decompose $P_{x_n}( \tau_n < \tau_0)$ similarly using $M^X$:
\begin{align*}
P_{x_n}( \tau_n < \tau_0 ) &= 
P_{x_n}\left( \tau_n < \tau_0, M^X_{\tau_n}  > 0  \right)
+
P_{x_n}\left( \tau_n < \tau_0, M^X_{\tau_n}  = 0 \right).\\
\intertext{ 
On the set $\{M^X_{\tau_n} >0 \}$, the process $X$ cannot reach the boundary
$\partial_1$ before $\tau_n$, therefore over this set 
1) the events $\{\tau_n < \tau_0\}$ and
$\{\tau < \infty\}$ coincide  (remember that $X$ and $Y$ are defined
on the same probability space)
2) the distribution of $(T_n(X),n-M^X)$ is the same as that of
$(Y,M)$ upto time $\tau_n.$
Therefore,}
&= P_y( \tau < \infty, M_\tau < n) + 
P_{x_n}\left( \tau_n < \tau_0, M^X_{\tau_n} =  0 \right).
\end{align*}
The first term on the right equals the first term on the right side of 
\eqref{e:decompose1}. We know that the second term in \eqref{e:decompose1}
goes to $0$ with $n$. Then  to finish our proof, it suffices to show
\begin{equation}\label{e:toproveconv1}
\lim_n 
P_{x_n}\left( \tau_n < \tau_0, M^X_{\tau_n} = 0\right)  = 0.
\end{equation}
$M^X_{\tau_n} = 0$ means that $X$ has hit $\partial_1$ before $\tau_n$.
Then the last probability equals
\begin{equation}\label{e:lastbit}
 P_{x_n}\left (  \sigma_1 < \tau_n < \tau_0\right ),
\end{equation}
which, we will now argue, goes to $0$ ($\sigma_1$ is the first time $X$
hits $\partial_1$);
\eqref{e:llninit} implies ${\mathcal X}_0 = e_1.$ 
Define $t^1 \doteq \inf\{t: {\mathcal X}_t(1)  = 0 \}$
and $t^0 \doteq \inf\{t: {\mathcal X}_t = 0 \in {\mathbb R}^d\}.$
By definition
$t^1 \le t^0 < \infty$
Now choose $t_0$ in \eqref{e:lln} to be equal to $t^0$, define
${\mathcal C}_n \doteq
\left\{ \sup_{k \le t^0 n} \in |X_k/n -  {\mathcal X}_{k/n} | > \delta \right \}$
and partition \eqref{e:lastbit} with ${\mathcal C}_n$:
\begin{align}\label{e:decomposewithCn}
P_{x_n}\left (  \sigma_1 < \tau_n < \tau_0\right) = 
P_{x_n}\left ( \{  \sigma_1 < \tau_n < \tau_0 \} \cap  {\mathcal C}_n\right)
+P_{x_n}\left( \{  \sigma_1 < \tau_n < \tau_0 \} \cap  {\mathcal C}_n^c\right).
\end{align}
The first of these goes to $0$ by \eqref{e:lln}.
The event in the second term is the following: $X$ remains at most $n\delta$ 
distance away $n {\mathcal X}$ until its $nt^0$ step, hits $\partial_1$
then  $\partial A_n$ and then $0$.
These and \eqref{e:sdecrease} imply that,
for $n$ large enough,
any sample path lying in this event can hit
$\partial A_n$ only after time $nt^0$. Thus, the  second probability on the
right side of \eqref{e:decomposewithCn} is bounded above by
\[
P_{x_n}(\{nt^0 < \tau_n < \tau_0\} \cap {\mathcal C}_n^c).
\]
The Markov property of $X$,
$ \{  \sigma_1 < \tau_n < \tau_0 \} \subset
 \{   \tau_n < \tau_0 \}$
and \eqref{e:lln}
imply that the last probability is less than
\[
\sum_{x: |x| \le n\delta} P_x(   \tau_n < \tau_0 ) P_{x_n} (X_{nt^0}= x).
\]
For $|x| \le n \delta$, 
the probability $P_x( \tau_n < \tau_0)$ decays exponentially in $n$
\cite[Theorem 2.3]{GlassKou};
then, the above sum goes to $0$. This establishes \eqref{e:toproveconv1} 
and finishes the proof of the proposition.

\end{proof}

\section{Analysis of the limit problem}\label{s:twodim}
In this section and the rest of the paper we will be focusing on the
two tandem queue process and its limit defined in Section \ref{s:intro}.
The analysis of the previous section suggests that we approximate
\[
P_x( \tau_n < \tau_0)
\]
with $W(T_n(x))$ where
\begin{equation}\label{e:simpler}
W(y) \doteq P_y ( \tau < \infty) = {\mathbb E}_y\left[ 1_{\{ \tau < \infty\}}\right].
\end{equation}
The goal of this section is to develop a framework in which we
will derive the following explicit formula for $W$:
\[
W(y) = P_y( \tau < \infty) = 
\rho_2^{y(1)-y(2)} + \frac{\mu_2 - \lambda}{\mu_2 - \mu_1} 
\rho_1^{y(1)-y(2)} \rho_1^{x(2)}
+ \frac{\mu_2-\lambda}{\mu_1 -\mu_2} \rho_2^{y(1)-y(2)} \rho_1^{y(2)},
\]
$ y \in {\mathbb Z}_+^2, y(1) \ge y(2)$;
the proof of this formula is given in as the final result
(Proposition \ref{p:exactformula})  of this section.

It follows from the Markov property of $Y$ that $W$ is a harmonic function of $Y$ (or $Y$-harmonic)
i.e., it satisfies:
\begin{equation}\label{e:linear}
V(y) = {\mathbb E}_y \left[ V(Y_1) \right] = 
\sum_{v \in {\mathcal V} } V(y + \pi_1(y, v)) p(v), y \in B,
\end{equation}
where 
\begin{align}\label{d:pi1}
{\mathcal V} &\doteq \{ (-1,0), (1,1), (0,-1) \}, \notag \\
\pi_1(x,v) &\doteq \begin{cases} v, &\text{ if } x +v  \in {\mathbb Z} \times {\mathbb Z}_+ \\
				  0      , &\text{otherwise.}
\end{cases}
\end{align}
$W(y) = P_y( \tau < \infty) = 1$ for $ y \in \partial B$
implies that $W$ also satisfies the boundary condition
\begin{equation}\label{e:boundary}
V|_{\partial B} = 1.
\end{equation}
A $Y$-harmonic function $h$ is said to be $\partial B$-determined if it is of the form
\[
h(y) = {\mathbb E}\left[f(Y_{\tau}) 1_{\{\tau < \infty\}}\right], y \in {\mathbb Z} \times {\mathbb Z}_+, y(1) \ge y(2).
\]
By its definition, $W$ is $\partial B$-determined.
Then $W$ is the unique $\partial B$-determined solution of (\ref{e:linear},\ref{e:boundary}).

Let $Z$ denote the ordinary unconstrained random walk in ${\mathbb Z}^2$ with
the same increments as $Y$.
The unconstrained version of \eqref{e:linear} is
\begin{equation}\label{e:linear1}
V(z) = {\mathbb E}_z \left[ V(Z_1) \right] = 
\sum_{v \in {\mathcal V} } V(z +v ) p(v), 
~~~ z \in {\mathbb Z}^2.
\end{equation}
A function is said to be a harmonic function of the unconstrained random
walk $Z$ if it satisfies \eqref{e:linear1}.

Our idea for solving
(\ref{e:linear},\ref{e:boundary}) (and hence obtaining a formula for $P_y(\tau< \infty)$)
is this:
\noindent
\begin{enumerate}
\item Construct a class ${\mathcal F}_Y$ 
of ``simple'' harmonic functions for the process $Y$
(i.e., a class of solutions to \eqref{e:linear})
For this
	\begin{enumerate}
	\item
	Construct a class ${\mathcal F}_Z$ of harmonic functions for the
	unconstrained process $Z$  of the form $z\mapsto \beta^{z(1)-z(2)}\alpha^{z(2)}$, $(\beta,\alpha) \in {\mathbb C}^2$,
	\item Use linear combinations of elements of ${\mathcal F}_Z$ to find
		solutions to \eqref{e:linear}.
	\end{enumerate}
	\item 
Represent the boundary condition \eqref{e:boundary}
		by linear combinations of the boundary values of the 
		$\partial B$-determined members of the class ${\mathcal F}_Y$.
\end{enumerate}
The definition of the class ${\mathcal F}_Z$ is given in
\eqref{d:classFZ} and that of ${\mathcal F}_Y$ is given in \eqref{d:classFY}.

A remark about uniqueness:
We have assumed that $X$ is stable; 
this implies that $Y_{\tau \wedge k}$, $k=1,2,3,...$, is unstable and therefore,
the Martin boundary of this process has points at infinity. Then
one cannot expect all harmonic functions of $Y$ to be
$\partial B$-determined and
in particular the system (\ref{e:linear},\ref{e:boundary}) will not
have a unique solution. In particular, the constant function ${\bm 1}(y) = 1$
solves this system, but as we will see below, ${\bm 1}$ is not $\partial B$-determined.
hence, once we find a 
solution of (\ref{e:linear}, \ref{e:boundary}) that we believe
equal to $P_y(\tau < \infty)$, 
we will have to prove that it is $\partial B$-determined.
\subsection{The characteristic polynomial and surface}\label{ss:charpolsurf}
Let us call
\begin{equation}\label{e:hamiltonian}
{\bm p}(\beta,\alpha)
\doteq \ \sum_{ v\in {\mathcal V} } p(v) \beta^{v(1)-v(2)}
\alpha^{v(2)}
=  \lambda \frac{1}{\beta}+ \mu_1 \alpha + \mu_2\frac{\beta}{\alpha}, ~~~~
 (\beta,\alpha) 
\in {\mathbb C}^2,
\end{equation}
the interior {\em characteristic polynomial} of the process $Y$;
\begin{equation}\label{e:chareqZ}
{\bm p}(\beta,\alpha) =1
\end{equation}
the interior {\em characteristic equation} of $Y$ and
\begin{equation}\label{d:DefHcal}
{\mathcal H} \doteq \{ (\beta,\alpha): 
{\bm p}(\beta,\alpha) =1  \}
\end{equation}
the interior {\em characteristic surface} of $Y$.
We borrow the adjective ``characteristic''
from the classical theory of linear ordinary differential equations; the
development below parallels that theory. 
${\bm p}$ is a rational function, not a polynomial, but it obviously becomes polynomial in $\alpha$ $[\beta]$ when multiplied
by $\beta$ [$\alpha$] or a polynomial in $\beta$ and $\alpha$ when multiplied by $\beta\alpha$; these polynomial representations 
are useful when we solve
${\bm p}(\beta,\alpha) = 1$, but the rational representation is simpler, more flexible and natural. For this reason, we use
the rational representation whenever possible, and switch to the polynomial representations when
needed.

Figure \ref{f:charsurftex} depicts the real section of the characteristic
surface of the walk for $\lambda = 0.1$, $\mu_1 = 0.5$ and $\mu_2 =0.4.$
${\mathcal H}$ is an affine algebraic curve
of degree $3$ \cite[Definition 8.1, page 32]{griffiths}.
The characteristic equation ${\bm p} = 1$ becomes  a quadratic 
equation in $\alpha$ when one 
multiplies it by $\alpha$; the discriminant of this quadratic equation is
\[
\Delta(\beta) = \left(\frac{\lambda}{\beta} - 1\right)^2 - 4\mu_1\mu_2\beta.
\]
Therefore, for $\beta \in {\mathbb C}$, $\Delta(\beta) \neq 0$
and $\beta \neq 0$,
points on ${\mathcal H}$ come in conjugate pairs $(\beta,\alpha_1)$ and
$(\beta,\alpha_2)$, satisfying
\begin{equation}\label{e:conjugator0}
\alpha_i = \frac{1}{\alpha_{3-i}}\frac{\mu_2 \beta}{\mu_1},
i \in \{1,2\}.
\end{equation}
These conjugate pairs will be central to the construction
of $Y$-harmonic functions in subsection \ref{ss:twoterms} below.

\begin{figure}[h]
\centering
\hspace{-0.5cm}
\input{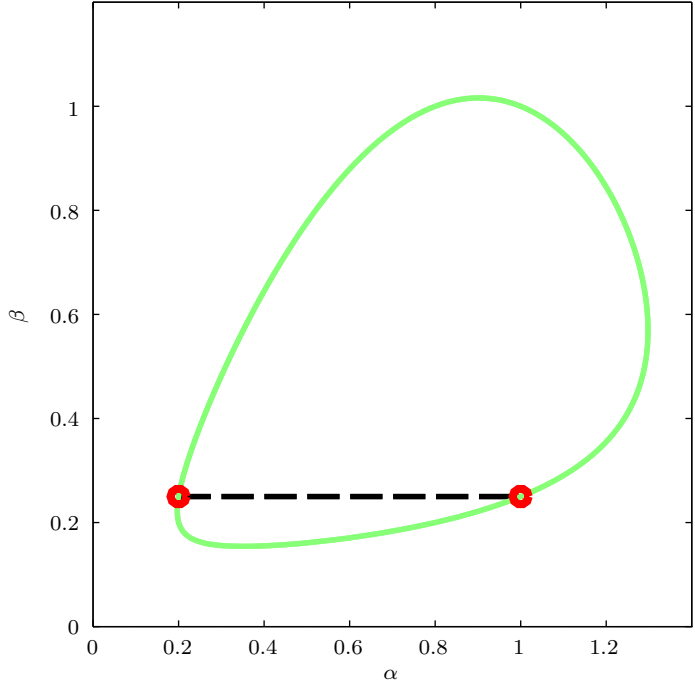}
\label{f:charsurftex}
\caption{The real section of the characteristic surface ${\mathcal H}$
for $\lambda =0.1$, $\mu_1 = 0.5$, $\mu_2 = 0.4$; 
the end points of the dashed line are an example 
of a pair of conjugate points $(\beta,\alpha_1)$
and $(\beta,\alpha_2)$; together
they define the $Y$-harmonic function 
$h_{\beta}(y) = \beta^{y(1)-y(2)} \left(C(\beta,\alpha_2)\alpha_1^{y(2)} - C(\beta,\alpha_1)\alpha_2^{y(2)}\right)$,
see Proposition \ref{p:harmonicYtwoterms2d}.
Each horizontal line intersecting the curve ${\mathcal H}$ twice gives a
pair of conjugate points defining a $Y$-harmonic function}
\end{figure}

Any point on ${\mathcal H}$ defines
a harmonic function of $Z$:
\begin{proposition}\label{p:basicharmonicZ}
For any $(\beta,\alpha) \in {\mathcal H}$,
$z \mapsto \beta^{z(1) - z(2)}\alpha^{z(2)}$, $z \in {\mathbb Z}^2$,
is an harmonic function of $Z$; in particular, it satisfies \eqref{e:linear}
for $y \in {\mathbb Z}^2$, $y(1),y(2) > 0.$
\end{proposition}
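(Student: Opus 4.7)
The plan is to verify the harmonicity identity by direct substitution, exploiting the fact that the candidate function is a product of exponentials in the two "natural" coordinates $z(1)-z(2)$ and $z(2)$, so each increment $v \in \mathcal V$ contributes a multiplicative factor depending only on $(\beta,\alpha)$ and not on $z$. Concretely, I set $h(z) = \beta^{z(1)-z(2)}\alpha^{z(2)}$ and compute $h(z+v)/h(z)$ for each of the three increments: for $v=(-1,0)$ the ratio is $\beta^{-1}$; for $v=(1,1)$ it is $\alpha$; and for $v=(0,-1)$ it is $\beta\alpha^{-1}$. These are exactly the monomials appearing in ${\bm p}(\beta,\alpha)$ as defined in \eqref{e:hamiltonian}.

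Next, I would sum these ratios against the probabilities $\lambda,\mu_1,\mu_2$, pulling $h(z)$ out as a common factor, to obtain
\[
\sum_{v\in{\mathcal V}} h(z+v)\,p(v) = h(z)\left(\lambda\beta^{-1} + \mu_1\alpha + \mu_2\beta\alpha^{-1}\right) = h(z)\,{\bm p}(\beta,\alpha).
\]
Since $(\beta,\alpha)\in{\mathcal H}$ means ${\bm p}(\beta,\alpha)=1$, the right-hand side collapses to $h(z)$, giving the unconstrained harmonicity \eqref{e:linear1} at every $z\in{\mathbb Z}^2$.

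For the second assertion — that $h$ satisfies \eqref{e:linear} at interior points $y$ with $y(1),y(2)>0$ — I would observe that the projection $\pi_1$ defined in \eqref{d:pi1} coincides with the identity on $\mathcal V$ whenever $y+v$ stays in ${\mathbb Z}\times{\mathbb Z}_+$; and for $y$ with $y(1)>0$ and $y(2)>0$, inspection of the three increments $(-1,0),(1,1),(0,-1)$ shows each $y+v$ indeed remains in ${\mathbb Z}\times{\mathbb Z}_+$. Hence at such $y$ the $Y$-harmonicity equation reduces to the $Z$-harmonicity equation already verified, so the claim follows.

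I do not anticipate any serious obstacle: the entire argument is a one-line substitution, and its only content is the observation that the characteristic equation ${\bm p}(\beta,\alpha)=1$ was defined precisely so that log-linear functions of the form $\beta^{z(1)-z(2)}\alpha^{z(2)}$ are $Z$-harmonic. The mildly delicate point — and the only thing worth stating explicitly — is the passage from $Z$-harmonicity to the interior $Y$-harmonicity, which requires the hypothesis $y(1),y(2)>0$ so that none of the three jumps is clipped by $\pi_1$.
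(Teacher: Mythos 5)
Your proof is correct and is exactly the argument the paper intends: the paper's entire proof is ``Condition $Z$ on its first step and use ${\bm p}(\beta,\alpha)=1$,'' which is precisely your substitution computation showing $\sum_v h(z+v)p(v) = h(z)\,{\bm p}(\beta,\alpha) = h(z)$. Your added remark about $\pi_1$ acting as the identity at interior points is a correct (and slightly more careful) account of the second assertion than the paper bothers to give.
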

\begin{proof}
Condition $Z$ on its first step and use ${\bm p}(\beta,\alpha) = 1.$
\end{proof}

For $(\beta,\alpha) \in {\mathbb C}^2$, define 
\begin{equation}\label{d:bracket}
[(\beta,\alpha),\cdot]: {\mathbb Z}^2 \mapsto {\mathbb C},~~
[(\beta,\alpha),z ]\doteq \beta^{z(1)-z(2)} \alpha^{z(2)}.
\end{equation}
The last proposition gives us the class of harmonic functions 
\begin{equation}\label{d:classFZ}
{\mathcal F}_Z \doteq 
\left\{ [(\beta,\alpha),\cdot],~~
(\beta,\alpha) \in {\mathcal H}
\right\}
\end{equation}
for $Z$.

\subsection{$\log$-linear harmonic functions of $Y$}
Define $B^o \doteq \{y \in {\mathbb Z}_+^2, y(1) >y(2) \}.$
Let us rewrite \eqref{e:linear} separately for the boundary $\partial_2$
and the interior $B^o - \partial_2$:
\begin{align}
V(y) &= \sum_{v \in {\mathcal V} } V(y + v) p(v), y \in B^o -\partial_2,
\label{e:linearint}\\
V(y) &= 
V(y)\mu_2 + 
\sum_{v \in {\mathcal V}, v(2)\neq -1 } V(y + v) p(v), y \in \partial_2 \cap B^o.
\label{e:linearboundary}
\end{align}
Any $ g\in {\mathcal F}_Z$ satisfies \eqref{e:linearint} (because \eqref{e:linearint}
is the restriction of \eqref{e:linear1} to $B^o - \partial_2$);
\eqref{e:linearint} is linear
and
so any finite linear combination of members of ${\mathcal F}_Z$ continues to satisfy
\eqref{e:linearint}. In the next two subsections we will show that
appropriate linear combinations of members of ${\mathcal F}_Z$ 
will also satisfy the
boundary condition \eqref{e:linearboundary} and define harmonic functions
of the constrained process $Y$.

\subsubsection{$Y$-harmonic function defined by a single point on ${\mathcal H}$}\label{ss:asingleterm}
Remember that members of ${\mathcal F}_Z$
are of the form $[(\beta,\alpha),\cdot]:z\rightarrow\beta^{z(1)-z(2)} 
\alpha^{z(2)}$ 
and $(\beta,\alpha) \in {\mathcal H}$; these define harmonic functions for $Z$
and they therefore satisfy \eqref{e:linearint}.
The simples way to construct a $Y$-harmonic function
is to look for $[(\beta,\alpha),\cdot]$ 
which satisfies \eqref{e:linear}, i.e., which satisfies
\eqref{e:linearint} and \eqref{e:linearboundary} at the same time. 
Substituting $[(\beta,\alpha),\cdot]$ in \eqref{e:linearboundary}
 we see that it solves \eqref{e:linearboundary}
if and only if $(\beta,\alpha) \in {\mathcal H}$ also satisfies
\begin{equation}\label{e:chareqYb2}
{\bm p}_{2}(\beta,\alpha) =1
\end{equation}
where
\begin{align}\label{e:hamiltonianb2}
{\bm p}_{2}(\beta,\alpha)
&\doteq 
 \sum_{ v\in {\mathcal V}, v(2)\neq -1 } p(v) \beta^{v(1)-v(2)}
\alpha^{v(2)} + \mu_2 = \lambda \frac{1}{\beta} + \mu_1 \alpha + \mu_2 ;
\end{align}
note 
\begin{equation}\label{e:intermsofPY}
{\bm p}_{2}(\beta,\alpha) = {\bm p}(\beta,\alpha) - \mu_2\left( \frac{\beta}{\alpha} - 1\right).
\end{equation}

Let us call
\eqref{e:chareqYb2} ``the characteristic equation of $Y$ on $\partial_2$''
and ${\bm p}_{2}$ its characteristic polynomial on the same boundary.
Define the boundary characteristic surface of $Y$ for $\partial_2$ as
${\mathcal H}_{2} \doteq \{ (\beta,\alpha) \in {\mathbb C}^2: 
{\bm p}_{2}(\beta,\alpha) = 1 \}$.

For $[(\beta,\alpha),\cdot]$ to  $Y$-harmonic,
$(\beta,\alpha)$
must lie on 
\[
{\mathcal H} \cap{\mathcal H}_{2} = \{ (0,0), (1,1), (\rho_1,\rho_1)\} \subset {\mathbb C}^2;
\]
the third of these points gives us our first nontrivial $Y$-harmonic function:
\begin{proposition}\label{p:singletermharmY}
The function
\begin{equation}\label{e:singletermharmfY}
[(\rho_1,\rho_1),\cdot]: y\mapsto \rho_1^{y(1) - y(2)} \rho_1^{y(1)} 
\end{equation}
is $Y$-harmonic.
\end{proposition}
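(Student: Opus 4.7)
The plan is to verify the two algebraic conditions identified in the paragraph preceding the statement: the point $(\rho_1,\rho_1)$ must lie on both the interior characteristic surface $\mathcal{H}$ and on the boundary characteristic surface $\mathcal{H}_{2}$. Once both memberships are confirmed, the function $[(\rho_1,\rho_1),\cdot]$ will simultaneously satisfy \eqref{e:linearint} (via Proposition \ref{p:basicharmonicZ}) and the reduced boundary relation \eqref{e:linearboundary}, which together are equivalent to the full $Y$-harmonicity equation \eqref{e:linear}.

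First I would substitute $\beta=\alpha=\rho_1$ into the interior characteristic polynomial \eqref{e:hamiltonian}. Using $\rho_1=\lambda/\mu_1$, the three terms evaluate to $\lambda/\rho_1=\mu_1$, $\mu_1\rho_1=\lambda$, and $\mu_2\rho_1/\rho_1=\mu_2$, whose sum is $\mu_1+\lambda+\mu_2=1$. Thus $(\rho_1,\rho_1)\in\mathcal{H}$, so by Proposition \ref{p:basicharmonicZ} the function $[(\rho_1,\rho_1),\cdot]$ is $Z$-harmonic, i.e.\ it satisfies \eqref{e:linearint}.

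Next I would verify the boundary condition. By the identity \eqref{e:intermsofPY}, ${\bm p}_{2}(\beta,\alpha)={\bm p}(\beta,\alpha)-\mu_2(\beta/\alpha-1)$, and along the diagonal $\beta=\alpha$ the correction term vanishes, so ${\bm p}_{2}(\rho_1,\rho_1)={\bm p}(\rho_1,\rho_1)=1$. Hence $(\rho_1,\rho_1)\in\mathcal{H}\cap\mathcal{H}_{2}$, which is exactly the condition derived above \eqref{e:chareqYb2} for $[(\rho_1,\rho_1),\cdot]$ to satisfy \eqref{e:linearboundary}.

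Combining the two steps, $[(\rho_1,\rho_1),\cdot]$ satisfies both \eqref{e:linearint} and \eqref{e:linearboundary}, and therefore the full equation \eqref{e:linear} on all of $B^o$, which is the definition of $Y$-harmonicity. There is no genuine obstacle here: the content of the proposition is the simple observation that on the diagonal $\beta=\alpha$ the interior and boundary characteristic equations coincide, so a single diagonal point on $\mathcal{H}$ (other than the trivial $(0,0)$ and $(1,1)$) suffices to generate a nontrivial $Y$-harmonic $\log$-linear function.
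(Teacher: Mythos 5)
Your proof is correct and follows exactly the paper's route: verify $(\rho_1,\rho_1)\in\mathcal{H}\cap\mathcal{H}_2$ and invoke the equivalence of \eqref{e:linearint}--\eqref{e:linearboundary} with \eqref{e:linear}; the paper's own proof is a one-line citation of these two memberships, and your version merely makes the substitution $\lambda/\rho_1+\mu_1\rho_1+\mu_2=1$ and the vanishing of the $\mu_2(\beta/\alpha-1)$ correction on the diagonal explicit.
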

\begin{proof}
That $[(\rho_1,\rho_1),\cdot]$ satisfies \eqref{e:linearint}
follows from the Markov property of $Y$ and $(\rho_1,\rho_1) \in {\mathcal H}$;
that $[(\rho_1,\rho_1),\cdot]$ satisfies \eqref{e:linearboundary}
follows from the Markov property of $Y$ and  $(\rho_1,\rho_1) \in {\mathcal H}_2$.
\end{proof}

\subsubsection{$Y$-harmonic functions via conjugate points}\label{ss:twoterms}
Define the boundary operator $D_2$ acting on functions on ${\mathbb Z}^2$ and giving
functions on $\partial_2$:
\begin{align*}
&D_2V = g,~~~~ V: {\mathbb Z}^2 \rightarrow {\mathbb C},\\
& g( {\mathrm y},0) \doteq
\left(\mu_2  +  \lambda V( y-1,0) + \mu_1 V(y+1,1) \right)
- V({\mathrm y},0),~~ {\mathrm y} \in {\mathbb Z};
\end{align*}
$D_2$ is the difference between the left and the right sides of \eqref{e:linearboundary}
and gives how much $V$ deviates from being $Y$-harmonic
along the boundary $\partial_2$:
\begin{lemma}\label{l:D2eq0}
 $D_2 V = 0$ 
if and only if $V$ is $Y$-harmonic on $\partial_2$.
\end{lemma}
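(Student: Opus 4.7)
The lemma is a direct unpacking of definitions: both ``$D_2 V = 0$'' and ``$V$ is $Y$-harmonic on $\partial_2$'' are the same linear equation in the three values $V(\mathrm{y}, 0)$, $V(\mathrm{y}-1, 0)$, $V(\mathrm{y}+1, 1)$ at each $(\mathrm{y}, 0) \in \partial_2 \cap B^o$. My plan is simply to write out the boundary harmonicity equation \eqref{e:linearboundary} for the two-tandem increment distribution and compare it term by term with the defining formula for $g = D_2 V$.

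Unfolding \eqref{e:linearboundary} at $y = (\mathrm{y}, 0)$ using ${\mathcal V} = \{(-1, 0), (1, 1), (0, -1)\}$: the only $v \in {\mathcal V}$ with $v(2) = -1$ is $(0, -1)$, with probability $\mu_2$; this is exactly the $V(y)\mu_2$ term already isolated on the right of \eqref{e:linearboundary}, reflecting the constraint that a downward attempt from $\partial_2$ keeps the process fixed. The remaining restricted sum collapses to the two contributions $\lambda V(\mathrm{y}-1, 0)$ and $\mu_1 V(\mathrm{y}+1, 1)$, so \eqref{e:linearboundary} reads
\[
V(\mathrm{y}, 0) \;=\; \mu_2 V(\mathrm{y}, 0) \;+\; \lambda V(\mathrm{y}-1, 0) \;+\; \mu_1 V(\mathrm{y}+1, 1),
\]
which, after moving $V(\mathrm{y}, 0)$ to the other side, matches the definition of $(D_2 V)(\mathrm{y}, 0) = 0$ term by term. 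Every step is a reversible algebraic rearrangement, so both directions of the equivalence follow simultaneously for each $\mathrm{y}$ with $(\mathrm{y}, 0) \in \partial_2 \cap B^o$.

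The main obstacle is essentially nonexistent: this lemma is a bookkeeping identity. Its value lies further downstream, where $D_2$ packages the boundary defect of a candidate function into a single scalar equation per boundary point. This lets one reduce the search for $Y$-harmonic functions to the interior condition (automatic for members of ${\mathcal F}_Z$ by Proposition \ref{p:basicharmonicZ}) plus the single residual requirement $D_2 h = 0$. The next subsections then solve that residual equation either by a single point of ${\mathcal H} \cap {\mathcal H}_2$ or, more flexibly, by superposing pairs of conjugate points on ${\mathcal H}$ so that their defects on $\partial_2$ cancel.
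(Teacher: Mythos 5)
Your proof is correct and is exactly the argument the paper intends: the paper's own "proof" is the one-line remark that the lemma "follows from the definitions involved," and your term-by-term comparison of \eqref{e:linearboundary} with the defining formula for $D_2V$ (reading the paper's "$\mu_2$" term as $\mu_2 V(\mathrm{y},0)$, consistent with \eqref{e:linearboundary} and the subsequent computation of $D_2[(\beta,\alpha),\cdot]$) is just that unpacking written out. Nothing further is needed.
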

The proof follows from the definitions involved.
For $(\beta,\alpha ) \in {\mathbb C}^2$ and
$\beta,\alpha \neq 0$
\[
\left[D_2 \left( [({\beta,\alpha}),\cdot] \right)\right]({\mathrm y},0) =  
\left( {\bm p}_2(\beta,\alpha) - 1\right) \beta^{{\mathrm y}}.
\]
where the left side denotes the value of the function 
$D_2 \left( [(\beta,\alpha),\cdot] \right)$ at $({\mathrm y},0)$, 
${\mathrm y} \in {\mathbb Z}$.
By definition, 
${\bm p}(\beta,\alpha)= 1$ for
$(\beta,\alpha) \in {\mathcal H}$;
this, the last display and \eqref{e:intermsofPY}
imply
\begin{equation}\label{e:imagofD2}
\left[D_2 \left( [(\beta,\alpha),\cdot] \right)\right]({\mathrm y},0) =  
\mu_2\left( 1- \frac{\beta}{\alpha} \right)
\beta^{{\mathrm y}}
\end{equation}
if $(\beta,\alpha) \in {\mathcal H}$.
One can write the function $({\mathrm y},0) \mapsto \beta^{{\mathrm y}}$
as $[(\beta,\alpha),\cdot] |_{\partial_2} = [(\beta,1),\cdot] |_{\partial_2}$;
in addition, define
\begin{equation}\label{e:defC2d}
C(\beta,\alpha) \doteq \mu_2\left(1 -  \frac{\beta}{\alpha} \right).
\end{equation}
With these, rewrite \eqref{e:imagofD2}
as
\begin{equation}\label{e:imageofD2short}
D_2 \left( [(\beta,\alpha),\cdot] \right)  = 
C(\beta,\alpha) [(\beta,1),\cdot]|_{\partial_2}.
\end{equation}
The key observation here
is this: $D_2\left( [(\beta,\alpha),\cdot]\right)$ is a constant
multiple of $[(\beta,1),\cdot] |_{\partial_2}$.
This and the linearity of $D_2$ imply that for
\begin{equation}\label{e:conjugacy}
\alpha_1 \neq \alpha_2, ~~(\beta,\alpha_1), (\beta,\alpha_2)  \in {\mathcal H},
\end{equation}
i.e., when $(\beta,\alpha_1)$ and $(\beta,\alpha_2)$ are conjugate points on ${\mathcal H}$,
$[(\beta,\alpha_1),\cdot]$ and $[(\beta,\alpha_2),\cdot]$
can be linearly combined to cancel out
each other's value under $D_2$. 
The next proposition uses these
conjugate pairs and the above argument to find new $Y$-harmonic functions:
\begin{proposition}\label{p:harmonicYtwoterms2d}
Assume $\beta \in {\mathbb C}$, $\beta \neq 0$ satisfies 
$\Delta(\beta)\neq 0$. Then
\begin{equation}\label{e:harmonicY2d}
h_\beta \doteq  C(\beta,\alpha_2) [(\beta,\alpha_1),\cdot]
- C(\beta,\alpha_1) [(\beta,\alpha_2),\cdot]
\end{equation}
is $Y$-harmonic.
\end{proposition}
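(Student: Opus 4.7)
My plan is to verify the $Y$-harmonicity of $h_\beta$ by separately checking the interior equation \eqref{e:linearint} on $B^o - \partial_2$ and the boundary equation \eqref{e:linearboundary} on $\partial_2 \cap B^o$, exactly following the decomposition set up immediately above the statement. The hypotheses $\beta \neq 0$ and $\Delta(\beta) \neq 0$ ensure, via \eqref{e:conjugator0}, that there are two distinct nonzero roots $\alpha_1 \neq \alpha_2$ with $(\beta,\alpha_1),(\beta,\alpha_2) \in \mathcal{H}$, so $h_\beta$ is a well-defined linear combination of two elements of $\mathcal{F}_Z$.

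For the interior equation, each summand $[(\beta,\alpha_i),\cdot]$ belongs to $\mathcal{F}_Z$ and is therefore $Z$-harmonic by Proposition \ref{p:basicharmonicZ}; in particular it satisfies \eqref{e:linearint}. Since \eqref{e:linearint} is linear over $\mathbb{C}$, any $\mathbb{C}$-linear combination inherits the property, so $h_\beta$ solves \eqref{e:linearint}.

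For the boundary equation I would invoke Lemma \ref{l:D2eq0}, which reduces the task to showing $D_2 h_\beta \equiv 0$ on $\partial_2$. Using the linearity of $D_2$ and applying \eqref{e:imageofD2short} to each summand one gets
$$D_2 h_\beta = C(\beta,\alpha_2)\, C(\beta,\alpha_1)\, [(\beta,1),\cdot]|_{\partial_2} - C(\beta,\alpha_1)\, C(\beta,\alpha_2)\, [(\beta,1),\cdot]|_{\partial_2} = 0.$$
The mechanism is exactly the ``conjugate pair'' idea emphasized in the text: \eqref{e:imageofD2short} shows that the boundary defect $D_2[(\beta,\alpha),\cdot]$ is a scalar multiple of the single function $[(\beta,1),\cdot]|_{\partial_2}$, a factor that depends only on $\beta$ and not on the chosen root $\alpha$. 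This shared factor is precisely what the coefficients $C(\beta,\alpha_2)$ and $-C(\beta,\alpha_1)$ in \eqref{e:harmonicY2d} are engineered to annihilate.

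Since the preparatory work culminating in \eqref{e:imageofD2short} has already reduced everything to a one-line cancellation, there is no real obstacle left in this proposition; the conceptual heart is the structural identity \eqref{e:imageofD2short}, already established before the statement. The one item I would record beyond the bare claim is nontriviality of $h_\beta$: $\Delta(\beta) \neq 0$ together with $\beta \neq 0$ yields $\alpha_1 \neq \alpha_2$ and hence $C(\beta,\alpha_1) \neq C(\beta,\alpha_2)$, so by the linear independence of $[(\beta,\alpha_1),\cdot]$ and $[(\beta,\alpha_2),\cdot]$ the function $h_\beta$ does not collapse to zero; this is what lets $h_\beta$ serve later as a genuine basis element in the superposition formula for $W$.
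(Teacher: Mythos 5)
Your proof is correct and follows essentially the same route as the paper's: the interior equation \eqref{e:linearint} by linearity and Proposition \ref{p:basicharmonicZ}, and the boundary equation by the cancellation $D_2 h_\beta = 0$ via \eqref{e:imageofD2short} and Lemma \ref{l:D2eq0}. The added remark on nontriviality of $h_\beta$ is a harmless bonus not present in the paper's proof.
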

\begin{proof}
By assumption
$(\beta,\alpha_1), (\beta,\alpha_2)$ are both on ${\mathcal H}$ and
therefore $[(\beta,\alpha_1),\cdot]$ and $[(\beta,\alpha_2),\cdot]$ are
 harmonic functions of $Z$ and in particular, they both satisfy
\eqref{e:linearint}. Then their linear combination $h_\beta$ also
satisfies \eqref{e:linearint}, because \eqref{e:linearint}  is linear in $V$.
It remains to show that $h_\beta$ solves \eqref{e:linearboundary} as well.
$\beta \neq 0$ implies $\alpha_1,\alpha_2 \neq 0,1$. Then 
\eqref{e:imageofD2short} implies
\begin{align*}
D_2(h_\beta) &= 
 C(\beta,\alpha_2) 
D_2( [(\beta,\alpha_1),\cdot]) -  C(\beta,\alpha_1) D_2([\beta,\alpha_2,\cdot])\\
&= 
C(\beta,\alpha_2) 
C(\beta,\alpha_1)  [(\beta,1),\cdot]|_{\partial_2}
-
C(\beta,\alpha_1)
C(\beta,\alpha_2)  [(\beta,1),\cdot]|_{\partial_2}\\
& = 0
\end{align*}
and
Lemma \ref{l:D2eq0} implies that $h_\beta$ satisfies \eqref{e:linearboundary}.
\end{proof}

The function $W(y)=P_y(\tau < \infty)$  takes the value $1$ on $\partial B$. For
this reason,
the conjugate pair on ${\mathcal H}$ that is most relevant to the computation of
$P_y(\tau < \infty)$ consists of $(\rho_2,1)$ and $(\rho_2,\rho_1)$; 
this pair is shown in Figure \ref{f:charsurftex}. $h_{\rho_2}$, the $Y$-harmonic function
defined by this pair, equals
\begin{align*}
h_{\rho_2}(y) &= C(\rho_2,\rho_1) [(\rho_2,1),y]  - C(\rho_2,1)[(\rho_2,\rho_1),y]
\intertext{which, by definitions \eqref{d:bracket} and \eqref{e:defC2d}, equals}
&= (\mu_2 - \mu_1) \rho_2^{y(1)-y(2)}  - (\mu_2 -\lambda) \rho_2^{y(1)-y(2)} \rho_1^{y(2)}.
\end{align*}
Note that the first term in the definition \eqref{d:Wstar} of $W^*$ equals $\frac{1}{\mu_2-\mu_1} h_{\rho_2}.$

With Proposition \ref{p:harmonicYtwoterms2d} 
 we define our basic class of harmonic functions of $Y$:
\begin{equation}\label{d:classFY}
{\mathcal F}_Y \doteq \{ h_\beta, \beta \neq 0, \Delta(\beta) \neq 0 \}.
\end{equation}

Members of ${\mathcal F}_Y$ consist of linear combinations of $\log$-linear functions;
with a slight abuse of language, we will also refer to such functions as
$\log$-linear.

\begin{remark}{\em
For the purposes of computing $P_y(\tau < \infty)$ for the tandem network case
treated in the present paper a single member of ${\mathcal H}_Y$ will suffice,
i.e., $h_{\rho_2}$, see Proposition \ref{p:exactformula} below. 
But ${\mathcal H}_Y$ is a whole family of simple to compute $Y$-harmonic functions and
they can be used to approximate other expectations or even $P_y(\tau < \infty)$ when
the underlying network is not tandem, see Remark \ref{r:r2} below.
}
\end{remark}

\subsection{Graph representation of $\log$-linear harmonic functions of $Y$}
\label{ss:graphs}
Figure \ref{f:graphsharmtex}
gives a graph representation of the harmonic functions developed
in the last subsection.

\begin{figure}[h]
\begin{center}
\scalebox{0.8}{
\centerline{\input{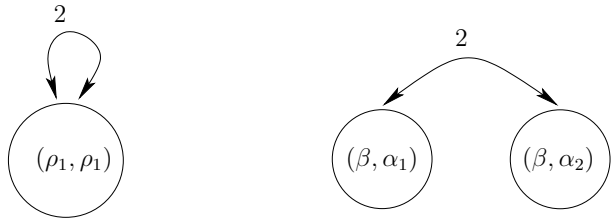}}}
\end{center}

\vspace{-0.65cm}
\caption{\hspace{0.25cm}Graph representation of $Y$-harmonic functions
constructed in Propositions \ref{p:singletermharmY} and \ref{p:harmonicYtwoterms2d}\label{f:graphsharmtex}}
\end{figure}

Each node in this figure represents a member of ${\mathcal F}_Z$. 
The edges represent the boundary conditions; in this case there is only
one, \eqref{e:linearboundary} of $\partial_2$, and the edge label ``$2$''
 refers to $\partial_2$.
A self connected vertex represents a member of ${\mathcal F}_Z$ that also
satisfies the $\partial_2$ boundary condition \eqref{e:linearboundary}, i.e.,
$z \rightarrow \beta(r_1)^{z(1)} r_1^{z(1)-z(2)}$ 
of Proposition \ref{p:singletermharmY};
the graph on the left represents exactly this function. The ``$2$'' labeled
edge on the right represents the conjugacy relation
\eqref{e:conjugator0} between $\alpha_1$ and $\alpha_2$, which allows these functions
to be linearly combined to satisfy the harmonicity condition of $Y$ on $\partial_2$.

{\em We call the graphs shown in Figure \ref{f:graphsharmtex} and the system 
of characteristic equations
they represent a {\em harmonic system}.} 
One can define harmonic systems for $d$ dimensional constrained
random walks as well (see
\cite[Section 5]{sezer2015exit}; these systems and their solutions
play a key role in the generalization of the analysis of this section to higher dimensions.

\subsection{$\partial B$-determined harmonic functions of $Y$}
\label{ss:partialBdet}
In the subsections \ref{ss:asingleterm} and \ref{ss:twoterms} above we have constructed classes of
$Y$-harmonic functions. For the purposes of computing $W(y) = P_y(\tau < \infty)$, $y \in B$, we need
$\partial B$-determined $Y$-harmonic functions. Proposition \ref{p:balayagesimple}
derives simple conditions that allow one check when a member of
${\mathcal F}_Y$ is $\partial B$ determined. In this, the following
fact will be useful.
\begin{lemma}\label{p:cantwanderforever}
Define
\begin{equation}\label{e:defofSigman}
\zeta_n \doteq \inf\left\{k: Y_k(1) = Y_k(2) + n\right\}.
\end{equation}
For $y \in {\mathbb Z}_+^2$, $0 \le y(1) - y(2) \le n$,
\begin{equation}\label{e:toprove0}
P_y( \zeta_n \wedge \zeta_0 = \infty) = 0.
\end{equation}
\end{lemma}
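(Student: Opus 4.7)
The key idea is to track the scalar difference $D_k \doteq Y_k(1) - Y_k(2)$ and exhibit a uniform lower bound, in a bounded number of steps, on the probability of leaving the strip $\{1,\ldots,n-1\}$. First I would reduce to the nontrivial case $D_0 = y(1)-y(2) \in \{1,\ldots,n-1\}$, since otherwise either $\zeta_0 = 0$ or $\zeta_n = 0$ and the claim is immediate.

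Next I would exploit the following observation: the increment $(-1,0)$ is never blocked by the constraint $\pi_1$, because $y + (-1,0) = (y(1)-1, y(2))$ always lies in $\mathbb{Z}\times\mathbb{Z}_+$. Hence this increment always decreases $D$ by exactly $1$. Consequently, starting from any $y$ with $D(y) = d \in \{1,\ldots,n-1\}$, on the event that the first $d$ i.i.d.\ increments $J_1,\ldots,J_d$ all equal $(-1,0)$ — an event of probability $\lambda^d \ge \lambda^{n-1}$ — the trajectory satisfies $D_d = 0$ and never exceeds $d \le n-1$, so $\zeta_0 \le n-1$ and $\zeta_n > n-1$. In particular,
\[
P_y\!\left(\zeta_n \wedge \zeta_0 \le n-1\right) \;\ge\; \lambda^{n-1} \;\doteq\; \varepsilon > 0
\]
uniformly in $y$ with $D(y) \in \{0,1,\ldots,n\}$.

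Finally I would apply the strong Markov property at the deterministic times $(n-1), 2(n-1), 3(n-1),\ldots$, together with the uniform bound just established at each step, to obtain
\[
P_y\!\left(\zeta_n \wedge \zeta_0 > k(n-1)\right) \;\le\; (1-\varepsilon)^k
\]
for every $k \ge 1$, and let $k \to \infty$ to conclude $P_y(\zeta_n \wedge \zeta_0 = \infty) = 0$. There is no real obstacle: the only point requiring care is verifying that the $(-1,0)$ increment is never affected by the half-space constraint defining $\Omega_Y$, after which the argument is a standard geometric escape-time estimate.
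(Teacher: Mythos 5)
Your proposal is correct and follows essentially the same route as the paper: the paper's proof also rests on the observation that from any point of the strip $\{y: 0\le y(1)-y(2)\le n\}$ the process exits within at most $n$ steps with probability at least $\lambda^n$ (precisely via the unblocked $(-1,0)$ increments of probability $\lambda$ each), followed by the same iterated Markov/geometric-decay argument. The only difference is that you spell out the details that the paper delegates to an external reference.
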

\begin{proof}
The proof follows from the fact that, when in  $C = \{y \in {\mathbb Z}^2_+, y(2) \le y(1) \le y(2) + n\}$
the process $Y$ hits $\partial C = \{y \in {\mathbb Z}_+^2 :y(1)-y(2) = n \text{ or } (1)=y(2)\}$ in
at most $n$ steps with probability greater than $\lambda^n$. For a detailed version of this
argument we refer the reader to \cite[Proof of Proposition 2.2]{sezer2015exit}.
\end{proof}

\begin{proposition}\label{p:balayagesimple}
Let $\alpha_1$, $\alpha_2$ and $\beta$ be as in Proposition
\ref{p:harmonicYtwoterms2d}. 
If 
\begin{equation}\label{e:simpleconditions}
|\beta| < 1,~~~|\alpha_1|,|\alpha_2|  \le 1
\end{equation}
then $h_\beta$ of \eqref{e:harmonicY2d} is $\partial B$-determined.
\end{proposition}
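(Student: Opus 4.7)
The plan is to realize $h_\beta(y)$ as $E_y[h_\beta(Y_\tau) 1_{\{\tau < \infty\}}]$ via a truncation argument: stop $Y$ at $\tau \wedge \zeta_n$, let $n\to\infty$, and use the hypothesis $|\beta|<1$ to kill the contribution from the far boundary $\{y(1)-y(2)=n\}$.

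First I would record that under \eqref{e:simpleconditions} the function $h_\beta$ is bounded on the strip $S_n \doteq \{y \in B : y(1)-y(2) \le n\}$. Indeed, since $y(1)-y(2) \ge 0$ on $B$ and $y(2)\ge 0$, the factors $|\beta|^{y(1)-y(2)}$ and $|\alpha_i|^{y(2)}$ are all bounded by $1$, so
\[
|h_\beta(y)| \le |C(\beta,\alpha_2)| + |C(\beta,\alpha_1)| \doteq K, \qquad y \in B.
\]
Moreover, on the specific slice $\{y(1)-y(2)=n\}\subset B$, the stronger estimate
\[
|h_\beta(y)| \le K\,|\beta|^n
\]
holds, because $y(1)-y(2)=n$ extracts a factor of $|\beta|^n$ from each term of \eqref{e:harmonicY2d}.

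Next, since $h_\beta$ is $Y$-harmonic (Proposition \ref{p:harmonicYtwoterms2d}) and bounded on $S_n$, and since $\tau \wedge \zeta_n$ is a.s.\ finite by Lemma \ref{p:cantwanderforever} (note $\tau = \zeta_0$ in its notation), the optional sampling theorem applied to the bounded martingale $h_\beta(Y_{k\wedge \tau\wedge \zeta_n})$ gives
\[
h_\beta(y) = E_y\bigl[h_\beta(Y_{\tau\wedge\zeta_n})\bigr]
= E_y\bigl[h_\beta(Y_\tau)\,1_{\{\tau < \zeta_n\}}\bigr] + E_y\bigl[h_\beta(Y_{\zeta_n})\,1_{\{\zeta_n \le \tau\}}\bigr].
\]
By the slice estimate the second term is bounded in absolute value by $K|\beta|^n$, which tends to $0$ as $n\to\infty$ because $|\beta|<1$. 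For the first term, observe that $\zeta_n$ is nondecreasing in $n$ and in fact $\zeta_n\to\infty$ pathwise (on any finite-length trajectory, $Y_k(1)-Y_k(2)$ is bounded), so the events $\{\tau < \zeta_n\}$ increase to $\{\tau < \infty\}$; the boundedness of $h_\beta$ on $\partial B$ then lets dominated convergence conclude
\[
E_y\bigl[h_\beta(Y_\tau)\,1_{\{\tau < \zeta_n\}}\bigr] \;\longrightarrow\; E_y\bigl[h_\beta(Y_\tau)\,1_{\{\tau < \infty\}}\bigr].
\]
Combining the two limits yields $h_\beta(y) = E_y[h_\beta(Y_\tau)\,1_{\{\tau<\infty\}}]$, which is precisely the definition of $\partial B$-determined with boundary data $f = h_\beta|_{\partial B}$.

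The only delicate point is the uniform decay on the slice $\{y(1)-y(2)=n\}$; this is where \emph{both} conditions in \eqref{e:simpleconditions} are needed, $|\beta|<1$ to supply the geometric factor and $|\alpha_i|\le 1$ to keep the $\alpha_i^{y(2)}$ factors from blowing up as $Y_{\zeta_n}(2)$ ranges over all of $\mathbb{Z}_+$. Everything else (optional sampling, monotone approach of $\{\tau<\zeta_n\}$ to $\{\tau<\infty\}$, dominated convergence) is routine once this bound is in hand.
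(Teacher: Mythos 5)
Your proof is correct and follows essentially the same route as the paper's: boundedness of $h_\beta$ under \eqref{e:simpleconditions}, optional sampling for the bounded martingale $h_\beta(Y_{\tau\wedge\zeta_n\wedge k})$ together with Lemma \ref{p:cantwanderforever}, the $|\beta|^n$ decay on the slice $\{y(1)-y(2)=n\}$ to kill the second term, and $\zeta_n\to\infty$ to pass to the limit in the first. Your explicit slice estimate and the remark on why both conditions in \eqref{e:simpleconditions} are needed are slightly more detailed than the paper's write-up, but the argument is the same.
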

\begin{proof}
By Proposition \ref{p:harmonicYtwoterms2d}  $h_\beta$ is 
$Y$-harmonic;
\eqref{e:simpleconditions} and its definition \eqref{e:harmonicY2d}
imply that $h_\beta$ is also bounded on $B^o$. 
Then
$M_k  =  h_\beta(Y_{\tau \wedge \zeta_n \wedge k})$ is a bounded martingale. 
This, Proposition \ref{p:cantwanderforever} and the optional
sampling theorem imply
\begin{align}\label{e:resofopsamp}
h_\beta(y) &= 
{\mathbb E}_y
\left[
h_\beta(Y_{\tau}) 1_{\{\tau < \zeta_n\}} \right]
+ {\mathbb E}_y\left[
h_\beta(Y_{\zeta_n}) 
1_{\{ \zeta_n \le \tau \}} \right], y \in B^o.
\end{align}
$Y_{\zeta_n}(1) = n$ for $\tau > \zeta_n.$ This and \eqref{e:simpleconditions}
imply 
\[
\lim_{n\rightarrow \infty}
 {\mathbb E}_y\left[
h_\beta(Y_{\zeta_n}) 
1_{\{ \zeta_n \le \tau \}} \right]
\le \lim_{n\rightarrow \infty} \beta^n = 0.
\]
This, $\lim_n \zeta_n = \infty$ and letting $n\rightarrow \infty$
in \eqref{e:resofopsamp}
give
\[
h_\beta(y) ={\mathbb E}_y \left[ h_\beta(Y_{\tau}) 1_{\{\tau <\infty \}} 
\right],
\]
i.e, $h_\beta$ is $\partial B$-determined.
\end{proof}
In addition, we have:
\begin{proposition}\label{p:singletermBdet}
The $Y$-harmonic function $[(\rho_1,\rho_1),\cdot]$ of Proposition
\ref{p:singletermharmY}
 is  $\partial B$-determined.
\end{proposition}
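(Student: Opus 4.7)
The plan is to mimic the optional sampling argument of Proposition \ref{p:balayagesimple}, adapted to the single-vertex function. First I would rewrite the function in a more transparent form: by definition \eqref{d:bracket},
\[
[(\rho_1,\rho_1),y] \;=\; \rho_1^{y(1)-y(2)}\rho_1^{y(2)} \;=\; \rho_1^{y(1)}.
\]
Since $X$ is stable we have $\rho_1 = \lambda/\mu_1 < 1$, and on $B$ we have $y(1)\ge y(2)\ge 0$, so this function is bounded by $1$ on $B$.

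Next I would introduce the stopping times $\zeta_n$ from \eqref{e:defofSigman} and consider
\[
M_k \;\doteq\; [(\rho_1,\rho_1),\, Y_{\tau\wedge\zeta_n\wedge k}].
\]
By Proposition \ref{p:singletermharmY}, $[(\rho_1,\rho_1),\cdot]$ is $Y$-harmonic, so $M_k$ is a martingale; by the boundedness observation it is uniformly bounded by $1$. Lemma \ref{p:cantwanderforever} gives $P_y(\tau\wedge\zeta_n=\infty)=0$, so the optional sampling theorem yields
\[
\rho_1^{y(1)} \;=\; \mathbb{E}_y\!\left[\rho_1^{Y_\tau(1)} 1_{\{\tau<\zeta_n\}}\right] + \mathbb{E}_y\!\left[\rho_1^{Y_{\zeta_n}(1)} 1_{\{\zeta_n\le\tau\}}\right],\qquad y\in B^o.
\]

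Then I would let $n\to\infty$ and dispose of the two terms separately. On $\{\zeta_n\le\tau\}$ we have $Y_{\zeta_n}(1) = Y_{\zeta_n}(2)+n \ge n$, so the integrand in the second expectation is bounded above by $\rho_1^n$, and that term vanishes as $n\to\infty$. For the first term, since $\zeta_n\nearrow\infty$ as $n\to\infty$, the indicator $1_{\{\tau<\zeta_n\}}$ increases to $1_{\{\tau<\infty\}}$; the integrand is dominated by $1$, so dominated convergence gives
\[
\lim_{n\to\infty}\mathbb{E}_y\!\left[\rho_1^{Y_\tau(1)} 1_{\{\tau<\zeta_n\}}\right] \;=\; \mathbb{E}_y\!\left[\rho_1^{Y_\tau(1)} 1_{\{\tau<\infty\}}\right].
\]
Combining these two limits exhibits $[(\rho_1,\rho_1),\cdot]$ as $\mathbb{E}_y[f(Y_\tau)1_{\{\tau<\infty\}}]$ with $f = [(\rho_1,\rho_1),\cdot]|_{\partial B}$, establishing the $\partial B$-determined property.

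There is no real obstacle: the argument is structurally identical to Proposition \ref{p:balayagesimple}. The only point to verify is that $[(\rho_1,\rho_1),\cdot]$ satisfies the analogues of the decay conditions \eqref{e:simpleconditions} in the direction of $\zeta_n$, which is immediate from $\rho_1<1$ and $Y_{\zeta_n}(1)\ge n$; this replaces the role played there by the bound $|\beta|<1$.
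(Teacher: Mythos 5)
Your proof is correct and is exactly the argument the paper intends: the paper's own proof of this proposition simply states that it is identical to that of Proposition \ref{p:balayagesimple}, using the bound $0 \le [(\rho_1,\rho_1),y]=\rho_1^{y(1)} \le 1$ on $B$ together with $Y$-harmonicity, which is precisely the optional-sampling argument you have written out.
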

\begin{proof} The proof is identical to that of
Proposition \ref{p:balayagesimple} and follows from 
$0 \le [(\rho_1,\rho_1),y] \le 1$ for $y \in B$ and the $Y$-harmonicity
of $[(\rho_1,\rho_1),\cdot].$
\end{proof}

Proposition \ref{p:balayagesimple} rests on the condition
 \eqref{e:simpleconditions}; we refer the reader to \cite[Section 4]{sezer2015exit}, in particular
Proposition 4.13 that derives conditions under which \eqref{e:simpleconditions} hold in the
context of general two node Jackson networks. For the purposes of computing $P_y(\tau < \infty)$, we only
need to consider the point $(\rho_1,\rho_1)$ and the conjugate pair $(\rho_2,1)$ and $(\rho_2,\rho_1)$;
it is trivial to check the conditions \eqref{e:simpleconditions} for these points. This gives us
the main result of this section:
\begin{proposition}\label{p:exactformula}
Under the stability assumption $\lambda< \mu_1,\mu_2$, $h_{\rho_2}$ is $\partial B$-determined
and we have
\[
P_y( \tau < \infty) = W^*(y)= \frac{1}{C(\rho_2,\rho_1)} h_{\rho_2}(y) + \frac{C(\rho_2,1)}{C(\rho_2,\rho_1)} [(\rho_1,\rho_1),y].
\]
$y \in B.$
\end{proposition}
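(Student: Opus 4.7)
The plan is to identify the right-hand side as a $\partial B$-determined $Y$-harmonic function that equals $1$ on $\partial B$, and then invoke the uniqueness of such functions (noted just after \eqref{e:boundary}) to conclude it equals $W(y)=P_y(\tau<\infty)$. Expanding $h_{\rho_2}$ and $[(\rho_1,\rho_1),\cdot]$ then recovers the closed form \eqref{d:Wstar}.

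The first ingredient is to verify that the points used in the construction lie on the correct surfaces. Using the probability identity $\lambda+\mu_1+\mu_2=1$, one checks directly that ${\bm p}(\rho_2,1)={\bm p}(\rho_2,\rho_1)={\bm p}(\rho_1,\rho_1)=1$ and ${\bm p}_2(\rho_1,\rho_1)=1$; stability gives $\rho_1\ne 1$, hence $(\rho_2,1)$ and $(\rho_2,\rho_1)$ are genuinely distinct conjugate points on ${\mathcal H}$, which is equivalent to $\Delta(\rho_2)\ne 0$. Propositions \ref{p:harmonicYtwoterms2d} and \ref{p:singletermharmY} then give that $h_{\rho_2}$ and $[(\rho_1,\rho_1),\cdot]$ are both $Y$-harmonic, so the combination $V^{\circ}:=\frac{1}{C(\rho_2,\rho_1)}h_{\rho_2}+\frac{C(\rho_2,1)}{C(\rho_2,\rho_1)}[(\rho_1,\rho_1),\cdot]$ is $Y$-harmonic by linearity of \eqref{e:linear}.

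For $\partial B$-determinedness, the hypotheses of Proposition \ref{p:balayagesimple} applied to $\beta=\rho_2$ with conjugate roots $(\alpha_1,\alpha_2)=(1,\rho_1)$ reduce to $\rho_2<1$, $|1|\le 1$, $\rho_1\le 1$, all immediate from $\lambda<\mu_1,\mu_2$. Proposition \ref{p:singletermBdet} handles the other summand. Since $\partial B$-determinedness is preserved under linear combinations (it is an integral representation, linear in the integrand), $V^{\circ}$ is $\partial B$-determined.

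The only step with genuine content is the boundary evaluation. On $\partial B$ the exponent $y(1)-y(2)$ vanishes, giving
\[
h_{\rho_2}(y)=(\mu_2-\mu_1)-(\mu_2-\lambda)\rho_1^{y(2)},\qquad [(\rho_1,\rho_1),y]=\rho_1^{y(2)}.
\]
Using $C(\rho_2,\rho_1)=\mu_2(1-\mu_1/\mu_2)=\mu_2-\mu_1$ and $C(\rho_2,1)=\mu_2-\lambda$, the $\rho_1^{y(2)}$ terms cancel and one is left with $1$; this is precisely where the ratio of the two coefficients is forced. Uniqueness of $\partial B$-determined solutions of (\ref{e:linear},\ref{e:boundary}) then yields $V^{\circ}=W$. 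I do not anticipate a real obstacle: almost all of the work has been prepared in subsections \ref{ss:asingleterm}--\ref{ss:partialBdet}, and the remaining steps reduce to three short algebraic verifications plus the uniqueness appeal.
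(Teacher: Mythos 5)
Your proposal is correct and follows essentially the same route as the paper: establish $Y$-harmonicity and $\partial B$-determinedness of $h_{\rho_2}$ and $[(\rho_1,\rho_1),\cdot]$ via Propositions \ref{p:harmonicYtwoterms2d}, \ref{p:balayagesimple}, \ref{p:singletermharmY} and \ref{p:singletermBdet}, observe the combination equals $1$ on $\partial B$, and conclude from the $\partial B$-determined representation. Your explicit boundary evaluation and the check that $(\rho_2,1)$, $(\rho_2,\rho_1)$ are distinct conjugate points merely spell out details the paper leaves implicit.
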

The definitions \eqref{d:bracket} and \eqref{e:defC2d} give us the following expanded formula
for $W^*$:
\begin{align*}
W^*(y)&= \frac{1}{C(\rho_2,\rho_1)} h_{\rho_2}(y) + \frac{C(\rho_2,1)}{C(\rho_2,\rho_1)} [(\rho_1,\rho_1),y]\\
&=\left( \rho_2^{y(1)-y(2)} 
+ \frac{\mu_2-\lambda}{\mu_1 -\mu_2} \rho_2^{y(1)-y(2)} \rho_1^{y(2)} \right)
+ \frac{\mu_2 - \lambda}{\mu_2 - \mu_1} 
\rho_1^{y(1)-y(2)} \rho_1^{y(2)},
\end{align*}
which is the one given in \eqref{d:Wstar}, in the introduction.

\begin{proof}
The conjugate points on ${\mathcal H}$ 
for $\beta=\rho_2$ are
$(\rho_2,1)$ and $(\rho_2,\rho_1)$; the stability assumption
$\lambda < \mu_1,\mu_2$ implies that both of these points satisfy \eqref{e:simpleconditions}.
It follows from Propositions \ref{p:harmonicYtwoterms2d} and \ref{p:balayagesimple}  that $h_{\rho_2}$ is a $\partial B$ determined
$Y$-harmonic function; similarly, it follows from Propositions \ref{p:singletermharmY} and \ref{p:singletermBdet} that
$[(\rho_1,\rho_1),\cdot]$ is a $\partial B$-determined $Y$-harmonic function. It follows that their linear combination
$W^*$ is also $\partial B$-determined and $Y$-harmonic, i.e.,
\begin{align*}
W^*(y) &= {\mathbb E}_y[ 1_{\{\tau < \infty\}} W^*(Y_\tau)]
\intertext{But $W^*(y) = 1$ on $\partial B$; therefore,}
&= P_y(\tau < \infty).
\end{align*}
\end{proof}

\begin{remark}\label{r:balayage}
{\em
The Balayage operator ${\mathcal T}$ (see \cite[page 25]{revuz1984markov}) for the set $\partial B$
is the operator mapping a function $f$ on $\partial B$ to the $Y$-harmonic function $g$ on $B$, defined as follows:
\[
{\mathcal T}: f \rightarrow g,  
g(x) = {\mathbb E}_x\left[ f\left(X_{\tau}\right) 1_{\{ \tau < \infty \}} \right].
\]
Therefore, by definition, a $Y$-harmonic function $h$ is $\partial B$-determined, if and only if it is the
image of some function under the Balayage operator ${\mathcal T}.$ Computing $P_y(\tau < \infty)$ amounts to
computing the image of the constant function $1$ on $\partial B$ under the Balayage operator.
What Propositions \ref{p:singletermharmY}, \ref{p:harmonicYtwoterms2d}, \ref{p:balayagesimple}, and \ref{p:singletermBdet} 
do is they give us a collection of 
{\em basis functions} for which the Balayage operator ${\mathcal T}$
is very simple to compute; 
these functions play the same role for the current problem
as the one which exponential functions do
in the solution of linear ordinary differential equations
or the trigonometric functions in the solution
of the heat and the Laplace equations.
Let us rewrite Proposition \ref{p:balayagesimple} more explicitly.
Suppose $\alpha_1$, $\alpha_2$ and $\beta$ 
are as in Proposition \ref{p:balayagesimple};
recall that
\[
h_\beta(y) = 
\beta ^{y(1)-y(2)}\left(
 C(\beta,\alpha_2)\alpha_1^{y(2)} - 
 C(\beta,\alpha_1)\alpha_2^{y(2)}\right), y \in {\mathbb Z}^2.
\]
Then, Proposition \ref{p:balayagesimple} says
\begin{equation}\label{e:explicitBalayage}
{\mathbb E}_y\left[ 
h_\beta(Y_\tau)1_{\{\tau < \infty\} }\right] = h_\beta(y).
\text{ i.e., }
{\mathcal T}(h_\beta|_{\partial B}) = h_\beta.
\end{equation},
}
\end{remark}
\begin{remark}\label{r:r2}{\em
In this article we are interested in the computation of $P_y(\tau < \infty) = {\mathbb E}_y[ 1_{\tau < \infty} ].$
More generally we may be interested in computing $g(y)={\mathbb E}_y[f(Y_\tau) 1_{\{\tau < \infty\}}]$ for some function $f$.
To approximate this expectation, one can proceed as follows. First, 
approximate $f$ with a finite superposition of the form
\[
f^* = \sum_{i=1}^K w_i f_i|_{\partial B}, 
\]
where $w_i \in {\mathbb C}$ and $f_i \in {\mathcal H}_{\mathcal Y}$,
i.e., a $Y$-harmonic function of the form
\[
f_i = C(\beta_i,\alpha_i^*)[(\beta_i,\alpha_i),\cdot] - C(\beta_,\alpha_i)[(\beta_i,\alpha_i^*),\cdot],
\]
and $|\beta_i|,|\alpha_i|,|\alpha^*_i| < 1$;
then by \eqref{e:explicitBalayage}
\[
{\mathbb E}_y[f^*(Y_{\tau}) 1_{\{\tau < \infty\}}]  = \sum_{i=1}^K w_i f_i(y)
\]
would give an approximation of  ${\mathbb E}_y[f(Y_\tau) 1_{\{\tau < \infty\}}]$ for $y \in B.$
The error made in this approximation will be bounded by $\max_{y\in \partial B} | f^*(y) - f(y)|.$
}
\end{remark}

\section{Convergence - initial condition set for $X$}\label{s:convergence2}
The convergence argument
of Section \ref{s:convergence1}
used an initial point for the $Y$ pricess.
The goal of this section is to provide a convergence argument starting
from an initial position specified for the $X$ process
as $X(0) = \lfloor n x \rfloor$ for  a fixed
$x \in {\mathbb R}_+^2$ with 
$x(1)+x(2) < 1$, as is done in LD analysis. We will show that the relative
error 
\[
\frac{|P_{x_n}(\tau_n < \tau_0) - P_{T(x_n)}(\tau < \infty)|}{P_{x_n}(\tau_n < \tau_0)}
\]
decays {\em exponentially} in $n$; see Proposition \ref{t:guzel} below.

For the present analysis we will also use the limit process $Y$ expressed
in the original coordinates of the $X$ process, which is $\bar{X} \doteq T_n(Y)$.
$\bar{X}$ is the same process as $X$ except that it
is constrained only at the boundary
$\partial_2.$
\begin{align*}
X_{k+1} &= X_k+ \pi(X_k,I_k)\\
\bar{X}_{k+1} &= \bar{X}_k + \pi_1(\bar{X}_k,I_k),
\end{align*}
where $\pi_1$ is as in \eqref{d:pi1}.
We will assume that $X$ and $\bar{X}$ start from the same initial position
\[
X_0= \bar{X}_0
\]
and whenever we specify an initial position below it will be for both processes.

As before,
$\tau_n =  \inf\{k: X_1(k) + X_2(k) = \partial A_n \}$ and
$\tau =\inf\{k: Y_k \in \partial B\}$; define $\partial\bar{A}_n \doteq \{x \in {\mathbb Z} \times {\mathbb Z}_+:
x(1) + x(2) = n \}$; 
By definition, $\bar{X}$ hits $\partial \bar{A}_n$ exactly when $Y$ hits $\partial B$;
therefore, $\tau = \bar{\tau}_n \doteq \inf\{k: \bar{X}_k \in \partial \bar{A}_n\}$,
and $P_{T(x_n)}(\tau < \infty) = P_{x_n}(\bar{\tau}_n < \infty)$.

\begin{proposition}\label{t:guzel}
For $x \in {\mathbb R}_+^2$, $0 < x(1) + x(2) < 1, x(1) > 0$ set
$ x_n \doteq \lfloor nx \rfloor$.
Then
\begin{equation}\label{e:relativeerror}
\frac{
|P_{x_n}(\tau_n < \tau_0) - P_{T(x_n)}( \tau < \infty )|}
	{P_{x_n}(\tau_n < \tau_0)}
=
\frac{|P_{x_n}(\tau_n < \tau_0) - P_{x_n}( \bar{\tau}_n < \infty )|}
	{P_{x_n}(\tau_n < \tau_0)}
\end{equation}
decays exponentially in $n$.
\end{proposition}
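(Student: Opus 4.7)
The plan is to drive $X$ and $\bar{X}$ by a common increment sequence and to exploit the fact, supplied by Proposition \ref{p:exactformula}, that $P_{x_n}(\bar{\tau}_n<\infty)=W^*(T_n(x_n))$ is known exactly. The numerator in \eqref{e:relativeerror} measures the discrepancy between the constrained and unconstrained walks, and I will argue that this discrepancy is controlled by the probability of trajectories that are forced to visit $\partial_1$ before exiting, which decays at a strictly larger LD rate than $W^*(T_n(x_n))$.

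Under the common driving sequence $\{I_k\}$, the processes $X$ and $\bar{X}$ agree until the first moment at which $X$ sits on $\partial_1$ and receives the increment $(-1,1)$. A short case analysis based on this coupling then gives
\[
\{\tau_n<\tau_0\}\,\triangle\,\{\bar{\tau}_n<\infty\} \;\subseteq\; \{\sigma_1<\tau_n<\tau_0\}\;\cup\;\{\sigma_1<\bar{\tau}_n<\infty\}.
\]
Writing $E_n$ for the right-hand side, the upper bound $|P_{x_n}(\tau_n<\tau_0)-W^*(T_n(x_n))|\le P_{x_n}(E_n)$ and the matching lower bound $P_{x_n}(\tau_n<\tau_0)\ge W^*(T_n(x_n))-P_{x_n}(E_n)$ follow at once; the relative error \eqref{e:relativeerror} is therefore at most $P_{x_n}(E_n)/\bigl(W^*(T_n(x_n))-P_{x_n}(E_n)\bigr)$.

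Next, the explicit formula \eqref{d:Wstar} for $W^*$ is a sum of three $\log$-linear terms and yields the sharp LD rate
\[
-\tfrac{1}{n}\log W^*(T_n(x_n))\;\longrightarrow\;r_0:=\min\bigl\{(1-x(1)-x(2))\log(1/\rho_2),\;(1-x(1))\log(1/\rho_1)\bigr\}.
\]
For the boundary-touching event $E_n$ I would invoke Proposition \ref{t:thm1}, announced in Section \ref{s:intro} as supplying exponential upper bounds for exactly these events via a sequence of subsolutions of a Hamilton--Jacobi--Bellman equation on the three-sheeted manifold obtained by gluing three copies of ${\mathbb R}_+^2$ along $\partial_1$ and $\partial_2$. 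The standard subsolution-to-exponential-bound transfer then gives $P_{x_n}(E_n)\le Ce^{-nr_1}$ with $r_1>r_0$, so the relative error is of order $e^{-n(r_1-r_0)+o(n)}$, which is the desired exponential decay.

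The main obstacle will be the subsolution construction on the glued manifold underlying Proposition \ref{t:thm1}, together with the verification of the strict inequality $r_1>r_0$. The geometric intuition is that, because $x(1)>0$, any trajectory that drives the first coordinate of $X$ all the way down to zero before triggering overflow must pay a strictly positive extra LD cost (roughly of order $x(1)\log(\mu_1/\lambda)$) while it traverses the sheet corresponding to the excursion to $\partial_1$. Realising this intuition as a single continuous subsolution that is compatible with the LD Hamiltonian of $X$ in each of the three regimes and matches correctly along both gluing loci is the delicate step; once such a subsolution is in hand, the comparison $r_1>r_0$ reduces to a short optimisation against the rates already appearing in $W^*$.
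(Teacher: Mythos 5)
Your reduction of the symmetric difference to the event $E_n=\{\sigma_1<\tau_n<\tau_0\}\cup\{\sigma_1<\bar{\tau}_n<\infty\}$ is correct as a set inclusion, but it is too coarse to close the argument, and this is a genuine gap rather than a technicality. The event $\{\sigma_1<\tau_n<\tau_0\}$ is \emph{not} exponentially negligible relative to $\{\tau_n<\tau_0\}$ in general: when $\mu_1>\mu_2$ (so $\gamma=-\log\rho_2$ and overflow occurs by building up the second queue), the large--deviations optimal overflow path from $x$ is driven by the twist $\lambda\leftrightarrow\mu_2$, under which the first queue has negative drift $\mu_2-\mu_1<0$; the optimal path therefore drains the first queue to $\partial_1$ \emph{before} the total reaches $n$ (at least for $x(1)$ not too large). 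Hence $P_{x_n}(\sigma_1<\tau_n<\tau_0)$ has the \emph{same} exponential decay rate as $P_{x_n}(\tau_n<\tau_0)$, your ratio $P_{x_n}(E_n)/P_{x_n}(\tau_n<\tau_0)$ need not vanish, and no subsolution construction can rescue the bound because the inequality $r_1>r_0$ you need is simply false for $E_n$. Relatedly, Proposition \ref{t:thm1} does not bound $P_{x_n}(E_n)$: it bounds the strictly smaller three--stage event $\Gamma_n=\{\sigma_1<\sigma_{1,2}<\tau_n<\tau_0\}$, where $\sigma_{1,2}$ is the first visit to $\partial_2$ \emph{after} $\sigma_1$.

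The missing idea is the cancellation that makes the difference event three--stage rather than two--stage. Under the common driving sequence, $X$ and $\bar{X}$ differ only through the suppressed $(-1,1)$ jumps on $\partial_1$, and these jumps leave the component sum unchanged; the paper's Proposition \ref{p:equalityofsums} shows $X_k(1)+X_k(2)=\bar{X}_k(1)+\bar{X}_k(2)$ for all $k\le\sigma_{1,2}$. Consequently the exit times $\tau_n$ and $\bar{\tau}_n$ coincide on $\{\tau_n\le\sigma_{1,2}\}$ even after $\partial_1$ has been visited, so the paths that dominate $\{\sigma_1<\tau_n<\tau_0\}$ when $\mu_1>\mu_2$ contribute \emph{identically} to both probabilities and cancel in the numerator of \eqref{e:relativeerror}. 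The symmetric difference is thus contained in $\{\sigma_1<\sigma_{1,2}<\tau_n<\tau_0\}\cup\{\sigma_1<\sigma_{1,2}<\tau<\infty\}$, an event that forces the walk to visit $\partial_1$ and then $\partial_2$ before exiting and whose probability is bounded by $e^{-n(\gamma-\epsilon)}$ via Propositions \ref{t:thm1} and \ref{p:artikforXbar}; since $\gamma>V(x)$ for $x(1)>0$, $0<x(1)+x(2)<1$ (Proposition \ref{p:ldfortandem}), the relative error decays exponentially. Your heuristic about an ``extra cost of order $x(1)\log(\mu_1/\lambda)$'' for reaching $\partial_1$ is exactly what fails: reaching $\partial_1$ is free (it is the fluid behaviour), and only the subsequent forced visit to $\partial_2$ carries the extra cost.
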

The proof will require several supporting results on 
$\sigma_1 =\inf\{ k: X_k \in \partial_1 \}$ and
\begin{align*}
\sigma_{1,2} &\doteq \inf \{k: k \ge  \sigma_1, X_k \in \partial_2 \},\\
\bar{\sigma}_{1,2} &\doteq \inf \{k: k \ge \sigma_1, \bar{X}_k(1) = -\bar{X}_k(2) \}.
\end{align*}
\begin{proposition}\label{p:equalityofsums}
\begin{equation}\label{e:equalityofsums}
X_k(1)+ X_k(2) = \bar{X}_k(1) + \bar{X}_k(2)
\end{equation}
for $k \le \sigma_{1,2}.$
\end{proposition}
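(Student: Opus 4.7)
The plan is a coupling induction on $k$. Both processes share the common increment sequence $\{I_k\}$, so I would first catalogue how $S_k \doteq X_k(1)+X_k(2)$ changes under each possible $I_k$: the step $(1,0)$ contributes $+1$ and is never suppressed; the step $(-1,1)$ contributes $0$ whether or not it is suppressed (its components cancel); and $(0,-1)$ contributes $-1$ when the second coordinate is positive and $0$ when it is suppressed at $\partial_2$. The identical dichotomy applies to $\bar S_k \doteq \bar X_k(1)+\bar X_k(2)$, because $\bar X$ differs from $X$ only in its treatment of the $\partial_1$ boundary, and $(-1,1)$ contributes $0$ to the sum whether suppressed or applied. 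Consequently, the single equivalence I need is $X_k(2)>0 \Leftrightarrow \bar X_k(2)>0$ for all $k<\sigma_{1,2}$; combined with $S_0=\bar S_0$, this forces $S_k=\bar S_k$ for $k\le\sigma_{1,2}$ by induction.

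To secure this equivalence, I would strengthen the induction hypothesis by carrying the auxiliary inequality $\bar X_k(2) \ge X_k(2)$ alongside the main claim, for every $k\le\sigma_{1,2}$. Split the range into two regimes. For $k\le\sigma_1$, neither process has yet encountered the $\partial_1$ constraint, which is the sole source of discrepancy, so $X_k=\bar X_k$ identically and both statements are trivial. For $\sigma_1<k\le\sigma_{1,2}$, the inductive step splits on $I_{k-1}$: a $(1,0)$ step leaves both second coordinates unchanged; a $(0,-1)$ step, by the inductive equivalence at step $k-1$, is either applied to both processes or suppressed in both, so either decreases each second component by $1$ or leaves both fixed; a $(-1,1)$ step always raises $\bar X(2)$ by $1$ and raises $X(2)$ by $1$ (unsuppressed) or $0$ (suppressed on $\partial_1$). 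In every case the nonnegative gap $\bar X_k(2)-X_k(2)$ is preserved, and the sum increments of $X$ and $\bar X$ agree.

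The loop closes as follows: for any $k$ with $\sigma_1<k<\sigma_{1,2}$, the definition of $\sigma_{1,2}$ forces $X_k(2)>0$, and the auxiliary inequality just maintained gives $\bar X_k(2)\ge X_k(2)>0$, so the two processes suppress $(0,-1)$ at exactly the same times on this range. This propagates the sum equality across the full range $k\le\sigma_{1,2}$. I do not foresee a substantive obstacle; the only item requiring care is the degenerate case $X_{\sigma_1}=(0,0)$, in which $\sigma_{1,2}=\sigma_1$ and the claim reduces to the already-handled identity $X_k=\bar X_k$ for $k\le\sigma_1$.
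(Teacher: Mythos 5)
Your proof is correct and follows essentially the same coupling argument as the paper: both identify the suppressed $(-1,1)$ steps on $\partial_1$ as the only source of divergence between $X$ and $\bar{X}$ before $\sigma_{1,2}$, and both observe that such steps leave the coordinate sum unchanged. The paper packages this as the invariant $X_k-\bar{X}_k=\varsigma_k\cdot(-1,1)$ and sums components, whereas your auxiliary inequality $\bar{X}_k(2)\ge X_k(2)$ is an immediate consequence of that invariant; your version merely makes explicit the point, stated only in passing in the paper, that $X$ and $\bar{X}$ suppress the $(0,-1)$ step at exactly the same times up to $\sigma_{1,2}$.
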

\begin{proof}
\begin{equation}\label{e:equalityatsigma1}
X_k = \bar{X}_k
\end{equation}
for $k \le \sigma_1$ implies \eqref{e:equalityofsums} for $k  \le \sigma_1$.
If $\sigma_1 = \sigma_{1,2}$ then we are done. Otherwise 
$X_{\sigma_1}(2) = \bar{X}_{\sigma_1}(2) > 0$ and $X_k(2) > 0$ for 
$\sigma_1 < k < \sigma_{1,2}$;  
let $\sigma_1 = \nu_1 < \nu_2 < \cdots <\nu_K < \sigma_{1,2}$ 
be the times when
$X$ hits $\partial_1$ before hitting $\partial_2.$ The definitions of $\bar{X}$
and $X$ imply that these are the only times
when the increments of $X$ and $\bar{X}$ differ: $X_{\nu_j+1} - X_{\nu_j}=0$ 
and $\bar{X}_{\nu_j+1} - \bar{X}(\nu_j) = (-1,1)$
if
$I_{\nu_j}= (-1,1)$; otherwise both differences equal $I_{\nu_j}$. This and
\eqref{e:equalityatsigma1} imply
\begin{equation}\label{e:invariantinc}
X_k - \bar{X}_k = \varsigma_k \cdot ( -1,1)
\end{equation}
for $k \le \sigma_{1,2}$
where 
\[
\varsigma_k \doteq \sum_{j = 1}^K 1_{\{ \nu_j \le k\} } 1_{\left\{ I_{\nu_j} = (-1,1) 
\right\}}
\]
and $\cdot$ denotes scalar multiplication.
Summing the components of both sides of \eqref{e:invariantinc}
gives \eqref{e:equalityofsums}.
\end{proof}

Define
\[
\Gamma_n \doteq \{ \sigma_1 < \sigma_{1,2} < \tau_n < \tau_0\}.
\]
$\Gamma_n$ is one particular way for $\{\tau_n < \tau_0\}$ to occur.
In the next proposition we find an upperbound on its probability 
in terms of
\[
\gamma \doteq -(\log(\rho_1) \vee \log( \rho_2)).
\]

\begin{proposition}\label{t:thm1}
For any $\epsilon > 0$
there is  $N > 0 $ such that if $n > N$
\begin{equation}\label{e:bound1}
P_{x_n} ( \Gamma_n ) \le e^{-n (\gamma-\epsilon)},
\end{equation}
where $x_n = \lfloor nx\rfloor$ and $x \in{\mathbb R}_+^2$, $x(1) + x(2) < 1.$
\end{proposition}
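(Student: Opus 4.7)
The plan is an HJB subsolution argument on the three-sheet manifold $M = Q_0 \sqcup Q_1 \sqcup Q_2$ advertised in the introduction, with $Q_i = \mathbb{R}_+^2$, $Q_0$ glued to $Q_1$ along $\partial_1$ and $Q_1$ to $Q_2$ along $\partial_2$. On $M$ there is a natural ``phase'' Markov chain $Z^M$: it evolves as $X$ inside each sheet, jumps from $Q_0$ onto $Q_1$ at its first visit to $\partial_1$, and from $Q_1$ onto $Q_2$ at its first subsequent visit to $\partial_2$. Started at the copy of $x_n$ in $Q_0$, the event $\Gamma_n$ is exactly the event that $Z^M$ hits the copy of $\partial A_n$ inside $Q_2$ before visiting the origin of $Q_2$.

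I will construct a sequence $V_n : M \to [0,\gamma]$ satisfying the discrete HJB subsolution inequality $\sum_v p(v)\exp[-n(V_n(z+v) - V_n(z))] \le 1$ at every $z \in M$, where the effective increment set at $z$ depends on whether $z$ is in the interior of a sheet, on a constraining boundary of that sheet, or at one of the two gluings, together with the terminal condition $V_n|_{\partial A_n \cap Q_2} = 0$ and $V_n(x_n) \ge \gamma - \epsilon$ for $n$ large. The natural baseline is $V_n \equiv \gamma$ on $Q_0 \cup Q_1$ and $V_n(x) = \gamma(n - x(1) - x(2))_+ /n$ on $Q_2$. With this baseline the interior inequality on $Q_2$ and the $\partial_1 \cap Q_2$ inequality both reduce to $\lambda e^\gamma + \mu_1 + \mu_2 e^{-\gamma} \le 1$, which holds by the definition $\gamma = -\log(\rho_1 \vee \rho_2)$ (equality when $\rho_1 \le \rho_2$, strict otherwise, by direct computation). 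The gluing at $\partial_1$ is trivial since $V_n$ is constant there, and the gluing at $\partial_2$ is automatic because $V_n$ drops from $\gamma$ on the $Q_1$ side to $\gamma(n-x(1))/n \le \gamma$ on the $Q_2$ side, making $e^{-nV_n}$ nondecreasing across the transition.

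The delicate ingredient is the $\partial_2$-boundary of $Q_2$: there the $(0,-1)$ increment is projected, and the baseline inequality degenerates to $\lambda e^\gamma + \mu_1 + \mu_2 \le 1$, which fails for $\gamma > 0$. I will therefore modify $V_n$ in a boundary layer of slowly growing width near $\partial_2 \cap Q_2$, choosing a profile whose gradient in the $x(1)$ and $x(2)$ directions is tuned so that the boundary inequality is restored while the interior and $\partial_1$-boundary inequalities continue to hold; the modification will cost only $o(1)$ in $V_n(x_n)$ relative to the baseline value $\gamma$. The construction will split into two cases depending on whether $\gamma = -\log\rho_1$ or $\gamma = -\log\rho_2$, since these correspond to different optimal LD geometries for the underlying walk.

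Once $V_n$ is in place, the conclusion is standard: $e^{-nV_n(Z^M_k)}$ is a supermartingale under $P_{x_n}$, and optional stopping at the first exit time of $Z^M$ from $M$ (which coincides with $\tau_n$ on $\Gamma_n$, where $V_n = 0$) yields
\[
P_{x_n}(\Gamma_n) \le e^{-nV_n(x_n)} \le e^{-n(\gamma - \epsilon)}
\]
for $n \ge N(\epsilon)$. The clear obstacle is the boundary-layer construction near $\partial_2 \cap Q_2$; once it is carried out, the martingale/optional-stopping step is routine.
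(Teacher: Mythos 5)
Your overall strategy is the paper's own: a subsolution on the three-sheet manifold, an exponential supermartingale, and optional stopping; and you correctly locate the obstruction at $\partial_2\cap Q_2$. But the boundary-layer construction you defer is essentially the whole content of the proof, and your sketch contains a concrete error at a point you declare unproblematic: the gluing at $\partial_2$. With $V_n\equiv\gamma$ on $Q_1$ and $V_n(x)=\gamma(n-x(1)-x(2))_+/n$ on $Q_2$, the function drops by $\gamma x(1)/n$ when the phase chain crosses from $Q_1$ to $Q_2$ at a point $(x(1),0)$, so $e^{-nV_n}$ jumps \emph{up} by the factor $e^{\gamma x(1)}$; since $x(1)$ can be of order $n$ at time $\sigma_{1,2}$, this destroys the supermartingale property of $e^{-nV_n(Z^M_k)}$ precisely where you call it ``automatic'' (a supermartingale needs $e^{-nV_n}$ non-increasing in conditional expectation, so a deterministic upward jump is fatal, not harmless). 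The paper avoids this by making the stage-$1$ function non-constant: it is a mollification of $\min(\mathrm{const},\,-\gamma x(1)+\mathrm{const}')$, which matches the stage-$2$ function on $\{x(2)=0\}$ and is continuous across both gluings (equations \eqref{e:V12nearonp2}--\eqref{e:V01nearonp1}). The same min-of-affine-functions device, with the gradients ${\boldsymbol r}_0=(0,0)$, ${\boldsymbol r}_1=-\gamma(1,0)$, ${\boldsymbol r}_2=-\gamma(1,1)$ and offsets of order $\varepsilon$, is also what produces the boundary layer near $\partial_2\cap Q_2$ that you postpone: in a strip of width $O(\varepsilon)$ the active gradient becomes $-\gamma(1,0)$, for which the constrained inequality $\lambda e^\gamma+\mu_1e^{-\gamma}+\mu_2\le 1$ does hold in both cases of $\gamma$, at a cost of only $O(\varepsilon)$ in the value at $x_n$.

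The concluding step is also not routine. A pointwise minimum of affine subsolutions is \emph{not} a discrete subsolution at its kinks: if $V=\min_j V_j$ and $V_j$ attains the minimum at $z$, then $V(z+v)-V(z)\le\langle r_j,v\rangle$, which bounds $e^{-n(V(z+v)-V(z))}$ from \emph{below}, not above. One must therefore mollify, and mollification on scale $c_2\varepsilon$ introduces a second-order Taylor error of order $c_3/(n\varepsilon)$ per step, which accumulates to $c_3\tau_{0,n}/(n\varepsilon)$ along the trajectory. Absorbing this factor forces $\varepsilon=\varepsilon_n\to 0$ with $n\varepsilon_n\to\infty$ together with a separate exponential tail bound $P(\tau_{0,n}>c_4n)\le e^{-n(\gamma+1)}$, which the paper combines in a proof by contradiction occupying the second half of its argument. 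Your proposal needs both the explicit layer profile (with the two-case verification you mention) and this error-control machinery before the optional-stopping conclusion can legitimately be drawn.
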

The proof will use the following definitions. Let $v_0 =(0,1)$, $v_1 = (-1,1)$ ,
$v_2 = (0,-1)$, $p_X(v_0) = \lambda$, $p_X(v_1) = \mu_1$, $p_X(v_2) = \mu_2$ and
\begin{equation}\label{e:defHs}
H_a(q)  \doteq
-\log\left( \sum_{i \in \{0,1,2\} -a } p_X(v_i) e^{-\langle v_i, q \rangle} 
+ \sum_{\{ i \in a\}}p_X(v_i) \right), ~~ a \subset \{1,2\},
\end{equation}
where $\langle \cdot,\cdot\rangle$ denotes the inner product in ${\mathbb R}^2.$
For $x \in {\mathbb R}_+^2$, 
set
\[
{\bm b}(x) \doteq \{i: x(i) = 0 \}.
\]
We will write $H$ rather than $H_\emptyset$. 

Let us show the gradient operator on smooth functions on ${\mathbb R}^2$ 
with $\nabla$.
The works \cite{thesis,DSW} use a smooth subsolution of 
\begin{equation}\label{e:HJB0}
H_{{\bm b}(x)}(\nabla V(x)) = 0
\end{equation}
to find a lowerbound on the decay rate of the second moment of IS estimators
for the probability $P_{x_n}(\tau_n  < \tau_0)$.
$V$ is said to be a subsolution of \eqref{e:HJB0} 
if $H_{{\bm b}(x)}(\nabla V(x)) \ge 0$.
The event $\Gamma_n$
consists of three stages: the process first hits $\partial_1$, then $\partial_2$
and then finally hits $\partial A_n$ without hitting $0$. 
To handle this, we will use a function
$(s,x)\rightarrow V(s,x)$, 
with two variables;
for the $x$ variable we will substitute 
the scaled position of the $X$ process,
and the discrete variable $s \in \{0,1,2\}$  is 
for keeping track of which of the above three stages
the process is in; $V$ will be a subsolution in the $x$ variable 
and continuous in $(s,x)$ (when $(s,x)$ is thought of as a point on the
manifold ${\mathcal M}$ 
consisting of three copies of ${\mathbb R}_+^2$ (one for each stage); the zeroth glued to the
first along $\partial_1$ and the first to the second along $\partial_2$)
 and therefore one can think of
$V$ as three subsolutions (one for each stage) glued together along the boundaries
of the state space of $X$ where transitions between the stages occur. 
We will call a function $(s,x) \rightarrow V(s,x)$ with the above properties
a subsolution of \eqref{e:HJB0} on the manifold ${\mathcal M}.$

Define
\begin{equation}\label{e:subsolpa}
\tilde{V}_i^{\varepsilon}(x) \doteq \langle {\boldsymbol r}_i, x\rangle + 2\gamma - (3-i)\varepsilon,~~~
\tilde{V}^{\varepsilon,j} \doteq \bigwedge_{i=0}^j \tilde{V}_i^{\varepsilon},
\end{equation}
where
\[
{\boldsymbol r}_0 \doteq(0,0), {\boldsymbol r}_1 = -\gamma(1,0), {\boldsymbol r}_2 \doteq -\gamma(1,1).
\]
The subsolution for stage $j$ will be a smoothed version of $\tilde{V}^{\varepsilon,j}$;
As in \cite{thesis, DSW}, 
we will need to vary $\varepsilon$ with $n$ in the convergence argument; for this
reason, $\varepsilon$ will appear as the third parameter of the constructed subsolution.
The details are as follows.

The subsolution for the zeroth stage is $\tilde{V}^{0,\varepsilon}$:
$V(0,x,\varepsilon) \doteq \gamma - 3\varepsilon$,
$\nabla V(0,\cdot) =0$ and it trivially satisfies \eqref{e:HJB0} and
is therefore a subsolution.

Define the smoothing kernel
\[
\eta_{\delta}(x) \doteq \frac{1}{\delta^2 M} \eta(x/\delta),~~\\
\eta(x) \doteq 1_{\{|x| \le 1\} } (|x|^2 - 1), M \doteq \int_{ {\mathbb R}^2} \eta(x) dx\\
\]

To construct the subsolution for the first and the second stages we will mollify
$\tilde{V}^{j,\varepsilon}$, $j=1,2$, with $\eta$:
\begin{equation}\label{e:subsolpas}
V(j,x,\varepsilon) \doteq \int_{{\mathbb R}^2} \tilde{V}^{j,\varepsilon}(y)
\eta_{c_2 \varepsilon} (x-y) dy,
\end{equation}
and $c_2$ is chosen so that 
\begin{equation}\label{e:V12nearonp2}
V(1,x,\varepsilon) = V(2,x,\varepsilon) 
\end{equation}
for $x \in
\partial_2$ and 
\begin{equation}\label{e:V01nearonp1}
V(1,x,\varepsilon) = V(0,x,\varepsilon) 
\end{equation}
for $x \in \partial_1$
(this is possible since $V(j,\varepsilon,x) \rightarrow \tilde{V}^{j,\varepsilon}$ 
as $c_2 \rightarrow 0$ and all of the involved functions are affine; see
\cite[page 38]{thesis} on how to compute $c_2$ explicitly).
That
$V(j,\cdot,\varepsilon)$, $j=1,2$ are subsolutions follow the concavity
of $H_a$ and the choices of the gradients ${\boldsymbol r}_i$; for details
we refer the reader to
\cite[Lemma 2.3.2]{thesis}; a direct computation gives
\begin{equation}\label{e:Boundon2ndderivative}
\left| \frac{ \partial^2 V(j,\cdot,\varepsilon)}{\partial x_i \partial x_j} \right| 
\le \frac{c_3}{\varepsilon},
\end{equation}
$j=1,2$, for a constant $ c_3 > 0$ (again, the proof of \cite[Lemma 2.3.2]{thesis}
gives the details of this computation).

The construction above implies
\begin{equation}\label{e:valueofVonpartialn}
V(2,x,\varepsilon ) < 0, x \in \{x: x(1) + x(2)= 1\}.
\end{equation}

Now on to the proof of Proposition \ref{t:thm1}.
\begin{proof}
$V(0,\cdot,\varepsilon)$ maps to a constant and thus
\begin{equation}\label{e:exactreplacement}
\langle \nabla W(x), v_i \rangle = W(x+v_i) - W(x) 
\end{equation}
if $W=V(0,\cdot,\varepsilon)$.
For $W= V(j,\cdot,\varepsilon)$, $j=1,2$, 
Taylor's formula and \eqref{e:Boundon2ndderivative} give
\begin{equation}\label{e:approxforV2}
\left| 
 \left\langle \nabla W(x), \frac{1}{n}v_i \right\rangle 
-\left( W\left(x+\frac{1}{n} v_i\right) - W(x) \right) \right| \le \frac{c_3}{n\varepsilon}.
\end{equation}
We will allow $\varepsilon$ to depend on $n$ so that
$\varepsilon_n \rightarrow 0$ and
$n\varepsilon_n  \rightarrow \infty.$
Define
$S_k = 1_{\{\sigma_1 > k \}} + 1_{\{ \sigma_{1,2} > k \}}$,
$M_0 \doteq 1$ and
{\small
\begin{align*}
M_{k+1} \doteq M_k
 \exp\left( -n 
\left( V \left(S_{k+1},\frac{X_{k+1}}{n}, \varepsilon_n\right)-V\left(S_k,
\frac{X_k}{n},\varepsilon_n\right)\right)
 -1_{\{n > \sigma_{1}\}} \frac{c_3}{n\varepsilon_n}\right)
\end{align*}
}
That $V(j,\cdot,\varepsilon_n)$, $j=0,1,2$ are subsolutions of \eqref{e:HJB0}, the relations
\eqref{e:exactreplacement}, \eqref{e:approxforV2}
\eqref{e:V01nearonp1} and \eqref{e:V12nearonp2}
imply that 
$M$ is a supermartingale;
\eqref{e:approxforV2} 
and
\eqref{e:exactreplacement} 
allow us to replace gradients
in \eqref{e:HJB0} and \eqref{e:defHs} with finite differences and 
\eqref{e:V12nearonp2} 
and 
\eqref{e:V01nearonp1} 
preserve the supermartingale
property of $M$ as $S$ passes from $0$ to $1$ and from $1$ to $2$.
This and $M \ge 0 $ imply (see \cite[Theorem 7.6]{MR1609153})
{\small
\[
{\mathbb E}_{x_n} 
\left[
\prod_{k=1}^{\tau_{0,n}} 
 \exp\left( -n 
\left( V \left(S_{k+1},\frac{X_{k+1}}{n},\varepsilon_n\right)-
V\left(S_k,\frac{X_k}{n},\varepsilon_n\right)\right) 
-1_{\{n > \sigma_{1}\}} \frac{c_3}{n\varepsilon_n}\right)
\right] \le 1,
\]
}
where $\tau_{0,n} \doteq \tau_n \wedge \tau_0.$
Restrict the expectation on the left to  $1_{\Gamma_n}$
and replace $1_{\{n > \sigma_{1}\} }$ with $1$ to make the expectation
smaller:
{\small
\begin{align*}
&{\mathbb E}_{x_n} 
\left[
1_{\Gamma_n}
e^{-\frac{c_3}{n\varepsilon_n} \tau_{0,n}}  
 \exp\left( -n 
\sum_{k=1}^{\tau_{0,n}} 
 V \left(S_{k+1},\frac{X_{k+1}}{n},\varepsilon_n\right)
-V\left(S_k,\frac{X_k}{n},\varepsilon_n\right)
\right)  \right] \le 1.
\end{align*}
}
Over $\Gamma_n$, $X$ first hits $\partial_1$ and then $\partial_{2}$
and finally $\partial A_n$. Furthermore, 
the sum inside the expectation is telescoping
across this whole trajectory; these imply that 
the last inequality reduces to
\[
{\mathbb E}_{x_n} 
\left[
1_{\Gamma_n}
e^{-\frac{c_3}{n\varepsilon_n} \tau_{0,n}} 
\exp( -n( V(2,X_{\tau_{0,n}},\varepsilon_n) - V(0,X_0,\varepsilon_n)) 
\right] \le 1.
\]
$\tau_{0,n} = \tau_n$ on $\Gamma_n$ and therefore on the same set
$X_{\tau_{0,n}} \in \partial_n$. This, $V(0,\cdot,\epsilon_n)= \gamma-3\epsilon_n$,
\eqref{e:valueofVonpartialn}
and the previous inequality give
\begin{equation}\label{e:basic}
{\mathbb E}_{x_n} 
\left[
1_{\Gamma_n}
e^{-\frac{c_3}{n\varepsilon_n} \tau_{0,n}} 
\right] \le e^{-n( \gamma - 3 \varepsilon_n)}.
\end{equation}
Now suppose that the statement of Theorem \ref{t:thm1} is not true,
i.e., there exists $\epsilon > 0$  and a sequence $n_k$ such that
\begin{equation}\label{e:contra}
P_{x_{n_k}}(\Gamma_{n_k}) > e^{-n_k (\gamma -\epsilon)}
\end{equation}
for all $k$.  Let us pass to this subsequence and drop the subscript
$k$. 
\cite[Theorem A.1.1]{thesis} implies that 
one can choose $ c_4 > 0$ so that $P( \tau_{0,n} > n c_4 ) \le e^{-n(\gamma + 1) }$
for $n$ large.
Then
\begin{align*}
{\mathbb E}_{x_n} 
\left[
1_{\Gamma_n}
e^{-\frac{c_3}{n\varepsilon_n} \tau_{0,n}} 
\right] 
&\ge
{\mathbb E}_{x_n} 
\left[
1_{\Gamma_n}
e^{-\frac{c_3}{n\varepsilon_n} \tau_{0,n}} 
 1_{\{\tau_{0,n} \le n c_4\}}
\right] 
\\
&\ge
e^{-\frac{c_4 c_3}{n\varepsilon_n} n }
{\mathbb E}_{x_n} 
\left[
1_{\Gamma_n}
 1_{\{\tau_{0,n} \le n c_4\}}
\right] 
\intertext{
$P(E_1 \cap E_2) \ge 
P(E_1) - P(E_2^c)$
for any two events $E_1$ and $E_2$; this and the previous line imply}
&\ge
e^{\frac{-c_3 c_4}{n\varepsilon_n} n } \left( P_{x_n}(\Gamma_n) - P_{x_n}( \tau_{0,n} > nc_4)
\right)
\\
&\ge
e^{-\frac{c_3 c_4}{n\varepsilon_n} n } \left( e^{-n(\gamma-\varepsilon)}
- e^{-(\gamma + 1)n} \right).
\end{align*}
By assumption $n\varepsilon_n \rightarrow \infty$ which implies
$c_3 c_4/n\varepsilon_n \rightarrow 0$; this and the last inequality say\\
${\mathbb E}_{x_n} 
\left[
1_{\Gamma_n}
e^{-\frac{c_3}{n\varepsilon_n} \tau_{0,n}} 
\right]$
cannot decay at an exponential rate faster than $\gamma - \epsilon$,
but this contradicts
\eqref{e:basic} because $\varepsilon_n \rightarrow 0.$
Then, there cannot be $\epsilon > 0$ and a sequence
$\{n_k\}$ for which \eqref{e:contra} holds and this implies
the statement of Proposition \ref{t:thm1}.
\end{proof}

Define 
${\boldsymbol r}_3 \doteq \log(\rho_2) (1,1)$ and
$V(x) \doteq (-\log(\rho_1) + \langle {\boldsymbol r}_1, x\rangle )\wedge (-\log(\rho_2) + \langle {\boldsymbol r}_3, x \rangle)$,
for $x \in {\mathbb R}^2$ 
\begin{proposition}\label{p:ldfortandem}
\[
\lim_{n\rightarrow \infty}-\frac{1}{n} \log  P_{x_n}( \tau_n < \tau_0) = V(x)
\]
for $x \in {\mathbb R}_+^2$, $0 < x(1) + x(2) < 1$ and  $x_n = \lfloor nx \rfloor$.
\end{proposition}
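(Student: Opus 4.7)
The plan is to establish matching upper and lower bounds for $-\tfrac{1}{n}\log p_n(x_n)$, both converging to $V(x)$, using the subsolution/supermartingale machinery developed for Proposition \ref{t:thm1} for the upper bound and explicit change-of-measure constructions for the lower bound.

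For the upper bound, first I would check that the two affine pieces defining $V$, namely $V_1(x)=-\log\rho_1+\langle{\bm r}_1,x\rangle$ and $V_2(x)=-\log\rho_2+\langle{\bm r}_3,x\rangle$, both satisfy $H(\nabla V_i)=0$ in the interior; this is a direct substitution in \eqref{e:defHs} using $\lambda+\mu_1+\mu_2=1$ and the two cases $\rho_1\ge\rho_2$ and $\rho_1<\rho_2$ for the definition of $\gamma$. Moreover $V_1$ satisfies $H_2(\nabla V_1)\ge 0$ along $\partial_2$ in both regimes. Following the template of the proof of Proposition \ref{t:thm1}, these pieces would be glued and mollified along the manifold $\mathcal M$ to produce a subsolution $U(s,x,\varepsilon)$ of \eqref{e:HJB0} on $\mathcal M$ with $U(0,x,\varepsilon)\ge V(x)-O(\varepsilon)$ at the starting point and $U\le 0$ on the rescaled exit line $\{z: z(1)+z(2)=1\}$. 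The telescoping supermartingale argument used in the proof of Proposition \ref{t:thm1} then gives $p_n(x_n)\le e^{-n(V(x)-O(\varepsilon))}$, and sending $\varepsilon\to 0$ produces $\liminf_n -\tfrac{1}{n}\log p_n(x_n)\ge V(x)$.

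For the lower bound, I would construct two tubes of sample paths realizing the two affine pieces of $V$. The first tube uses a Cram\'er-tilted change of measure with drift dual to ${\bm r}_3$, sending the process from $x$ to the exit line $\{z(1)+z(2)=1\}$ in time $1-x(1)-x(2)$ along a straight trajectory; by the standard Cram\'er-type LD lower bound, the probability of a tube of width $\varepsilon$ around it is at least $e^{-n(V_2(x)+O(\varepsilon))}$. The second tube, needed only in the regime $\rho_1>\rho_2$ where $V_1$ can be the effective minimum, uses the tilt dual to ${\bm r}_1$ to push the process first down to $\partial_2$ and then along $\partial_2$ to the corner $(1,0)$, giving probability at least $e^{-n(V_1(x)+O(\varepsilon))}$. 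Taking the larger of the two tube bounds and sending $\varepsilon\to 0$ gives $\limsup_n -\tfrac{1}{n}\log p_n(x_n)\le V(x)$.

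The main obstacle is the subsolution construction for the upper bound. A short calculation gives $H_2(\nabla V_2)=-\log(\mu_1+2\mu_2)<0$, because the stability condition $\lambda<\mu_2$ rewrites as $\mu_1+2\mu_2>1$; hence the pointwise minimum $V_1\wedge V_2$ is not a subsolution along $\partial_2$ in the regime $\rho_2>\rho_1$. The fix, parallel to the three-stage construction in the proof of Proposition \ref{t:thm1}, is to define stages on $\mathcal M$ so that entering $\partial_2$ triggers a transition to a stage whose gradient is admissible for $H_2$, and to tune the intercepts (as with the constant $c_2$ in \eqref{e:subsolpas}) so that the mollified subsolution is continuous across the gluing seams and equals $V(x)-O(\varepsilon)$ at the starting point. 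Once this manifold subsolution is in hand, both the upper and lower bound arguments follow along the templates already developed in the paper and in \cite{thesis, DSW}.
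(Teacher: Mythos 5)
Your overall route is the paper's: the omitted proof is exactly a one\-/stage version of the mollified piecewise\-/affine subsolution plus supermartingale/telescoping argument of Proposition \ref{t:thm1}, giving $\liminf_n -\tfrac1n\log P_{x_n}(\tau_n<\tau_0)\ge V(x)$, and the change-of-measure tube bound you add is the standard complement that the paper leaves implicit. The obstruction you isolate on $\partial_2$ is genuine: with $\lambda+\mu_1+\mu_2=1$ one indeed has $H_2(\nabla V_2)=-\log(\mu_1+2\mu_2)<0$, and when $\rho_2>\rho_1$ the difference $V_1-V_2\equiv\log(\rho_2/\rho_1)>0$ is a \emph{constant}, so $V=V_2$ identically and no mollification of $V$ as written can satisfy \eqref{e:HJB0} on $\partial_2$. (Minor point: $H(\nabla V_1)=0$ only when $\gamma=-\log\rho_1$; when $\rho_2>\rho_1$ one gets $H(\nabla V_1)>0$, which is harmless.)

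Two substantive gaps remain. First, there is a mirror-image obstruction you missed: $H_1(\nabla V_1)=-\log\bigl(\min(\mu_1,\mu_2)+\mu_1+\mu_2\bigr)=-\log\bigl(1+\min(\mu_1,\mu_2)-\lambda\bigr)<0$, and in the regime $\rho_1>\rho_2$ the minimum on the segment of $\partial_1$ with $x(2)<1-\log\rho_1/\log\rho_2$ is attained by $V_1$; since $X$ spends order-$n$ time on $\partial_1$ before $\tau_0$, the supermartingale property genuinely fails there for your construction. Second, staging is neither sufficient as described nor necessary: any stage function that matches $V_2$ on $\partial_2$ (continuity across the seam) and satisfies $H_2\ge 0$ there is forced to have $\partial U/\partial x(1)=\log\rho_2$ and $\partial U/\partial x(2)\ge\log\rho_2+\log\frac{\mu_1}{\mu_1-\mu_2+\lambda}$ on $\partial_2$, hence to increase along the exit line away from $(1,0)$; to keep the terminal value nonpositive you must again take a minimum with the $V_2$ piece, and once you do that the single-stage construction already works. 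The device that actually closes both gaps is the $\varepsilon$-ordering of intercepts in \eqref{e:subsolpa} (not the mollification radius $c_2$): mollify
\[
\bigl(\gamma-3\varepsilon\bigr)\wedge\bigl(\gamma(1-x(1))-2\varepsilon\bigr)\wedge\bigl(V_2(x)-\varepsilon\bigr).
\]
All three gradients $0$, $-\gamma e_1$, $\log\rho_2\,(1,1)$ satisfy $H\ge0$; within an $O(\varepsilon)$-strip of $\partial_1$ only the constant and the $V_2$ pieces are active (both $\partial_1$-admissible, since $\gamma(1-x(1))=\gamma$ there), within an $O(\varepsilon)$-strip of $\partial_2$ only the constant and the $-\gamma e_1$ pieces are active (both $\partial_2$-admissible, since $V_2=\gamma(1-x(1))$ on $\partial_2$ when $\gamma=-\log\rho_2$, and $V_2>\gamma(1-x(1))$ there otherwise), the value at $x_n/n$ is $V(x)-3\varepsilon$ because $\min\bigl(\gamma,\gamma(1-x(1)),V_2(x)\bigr)=V(x)$ in both regimes, and the $V_2-\varepsilon$ piece keeps the function $\le-\varepsilon$ on the exit line. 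With this replacement for your manifold construction, the rest of your argument (and your lower bound) goes through.
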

The omitted proof is a one step version of the argument used in the proof
of Proposition \ref{t:thm1} and uses a mollification of $V$ as the subsolution.

\begin{proposition}\label{p:artikforXbar}
For any $\epsilon > 0$
there is  $N > 0 $ such that if $n > N$
\begin{equation}\label{e:artikforXbar}
P_x( \sigma_1 < \sigma_{1,2} < \tau < \infty)  \le e^{-n(\gamma-\epsilon)}
\end{equation}
where $x_n = \lfloor nx\rfloor$ and $x  \in {\mathbb R}_+^2$, $x(1) + x(2) < 1.$
\end{proposition}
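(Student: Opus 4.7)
The plan is to mirror the proof of Proposition \ref{t:thm1}, now tracking the partially constrained process $\bar X$ in place of $X$ and the stopping time $\tau = \bar\tau_n$ in place of $\tau_n\wedge \tau_0$. I would reuse the three-stage subsolution $V(s,x,\varepsilon)$ constructed on the manifold ${\mathcal M}$. Recall that each $V(j,\cdot,\varepsilon)$ is a smoothed piecewise-affine subsolution of the HJB equation \eqref{e:HJB0} simultaneously meeting the interior condition $H_{\emptyset}(\nabla V)\ge 0$ on ${\mathbb R}^2$, the boundary condition $H_{\{1\}}(\nabla V)\ge 0$ on $\partial_1$ and $H_{\{2\}}(\nabla V)\ge 0$ on $\partial_2$. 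Because the dynamics of $\bar X$ coincide with those of $X$ everywhere except on $\partial_1$, where $\bar X$ is unconstrained and only $H_{\emptyset}$ is needed, the very same $V$ is automatically a subsolution of $\bar X$'s HJB on its state space.

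With this in hand, I keep the stage indicator $S_k = 1_{\{\sigma_1 > k\}} + 1_{\{\sigma_{1,2} > k\}}$ built from $X$'s hitting times and form the supermartingale
\[
\bar M_{k+1} = \bar M_k \exp\!\bigl(-n(V(S_{k+1},\bar X_{k+1}/n,\varepsilon_n) - V(S_k,\bar X_k/n,\varepsilon_n)) - 1_{\{k>\sigma_1\}}\tfrac{c_3}{n\varepsilon_n}\bigr).
\]
By Proposition \ref{p:equalityofsums} one has $\bar X_k = X_k$ for $k\le \sigma_1$, and the argument in its proof (the discrepancy counter $\varsigma_{\sigma_{1,2}}$ is forced to zero by $\bar X(2)\ge 0$ and $X_{\sigma_{1,2}}(2)=0$) gives $\bar X_{\sigma_{1,2}} = X_{\sigma_{1,2}} \in \partial_2$. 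Therefore the gluing relations \eqref{e:V01nearonp1} and \eqref{e:V12nearonp2} continue to hold at the stage transitions and $\bar M$ is a nonnegative supermartingale; between $\sigma_1$ and $\sigma_{1,2}$ the process $\bar X$ may enter $\{\bar X(1)<0\}$, but this is harmless because $V(1,\cdot,\varepsilon)$ is defined on all of ${\mathbb R}^2$ and retains the interior subsolution property there.

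Applying optional sampling at $\tau\wedge nc_4$, restricting to $\{\sigma_1<\sigma_{1,2}<\tau\le nc_4\}$, using that $\bar X_\tau \in \partial\bar A_n$ together with the $\bar X$-analogue of \eqref{e:valueofVonpartialn}, and telescoping the exponential across the three stages yields
\[
{\mathbb E}_{x_n}\!\left[1_{\{\sigma_1<\sigma_{1,2}<\tau\le nc_4\}}\, e^{-\tfrac{c_3}{n\varepsilon_n}\tau}\right] \le e^{-n(\gamma - 3\varepsilon_n)}.
\]
Combining this with a tail estimate of the form $P_{x_n}(nc_4 < \tau < \infty) \le e^{-n(\gamma+1)}$ and running the same contradiction step as in Proposition \ref{t:thm1}, with $\varepsilon_n\to 0$ and $n\varepsilon_n\to\infty$, yields \eqref{e:artikforXbar}.

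The main obstacle I anticipate is precisely this tail estimate for $\tau$. Unlike $\tau_n\wedge \tau_0$ in Proposition \ref{t:thm1}, the stopping time $\tau$ here may be infinite with positive probability, so one must show that on $\{\tau<\infty\}$ it concentrates on $[0,nc_4]$ at LD rate at least $\gamma+1$. This can be handled by coupling $\bar X$ with $X$ through the increment representation \eqref{e:invariantinc} and invoking the $X$-tail bound \cite[Theorem A.1.1]{thesis}, or more directly by leveraging the explicit formula of Proposition \ref{p:exactformula} for $W^* = P_y(\tau<\infty)$, whose geometric modes force contributions from excursions of length $\gg n$ to decay exponentially.
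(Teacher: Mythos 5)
Your proposal takes a much heavier route than the paper and, as written, it contains two genuine gaps. The first is the claim that $\bar X_{\sigma_{1,2}} = X_{\sigma_{1,2}} \in \partial_2$, which you need in order to preserve the gluing relation \eqref{e:V12nearonp2} at the stage transition of your supermartingale $\bar M$. This claim is false. At a time $\nu_j$ when $X$ sits on $\partial_1$ and $I_{\nu_j}=(-1,1)$, the constrained process $X$ does not move while $\bar X$ moves by $(-1,1)$; hence $X_k-\bar X_k=\varsigma_k\cdot(1,-1)$ (the sign in \eqref{e:invariantinc} as printed is off), i.e.\ $\bar X_k(2)=X_k(2)+\varsigma_k$. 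At $\sigma_{1,2}$, where $X(2)=0$, this gives $\bar X_{\sigma_{1,2}}(2)=\varsigma_{\sigma_{1,2}}\ge 0$, which is strictly positive as soon as $X$ has been pushed at $\partial_1$ by a $(-1,1)$ increment; the constraint $\bar X(2)\ge 0$ forces nothing. (A sanity check: your conclusion $\varsigma_{\sigma_{1,2}}=0$ would mean $X=\bar X$ up to $\sigma_{1,2}$, which cannot be right since the two processes differ precisely at the pushes on $\partial_1$. Concretely, from $(0,2)$ with increments $(-1,1),(0,-1),(0,-1)$ one reaches $X=(0,0)\in\partial_2$ but $\bar X=(-1,1)\notin\partial_2$.) Consequently the supermartingale property of $\bar M$ across the stage-$1$-to-stage-$2$ transition is not established: $V(1,\cdot,\varepsilon)$ and $V(2,\cdot,\varepsilon)$ agree only on $\partial_2$, and $\bar X_{\sigma_{1,2}}/n$ need not lie there. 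The second gap is the tail bound $P_{x_n}(nc_4<\tau<\infty)\le e^{-n(\gamma+1)}$, which you flag but do not prove; unlike $\tau_{0,n}$ in Proposition \ref{t:thm1}, the time $\tau$ is infinite with positive probability, so \cite[Theorem A.1.1]{thesis} does not apply and this estimate is a genuinely missing ingredient rather than a routine citation.

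For comparison, the paper's proof avoids the subsolution machinery entirely. It splits the event according to whether $\tau<\bar\sigma_{1,2}$ or $\bar\sigma_{1,2}<\tau<\infty$. On $\{\sigma_1<\sigma_{1,2}<\tau<\bar\sigma_{1,2}\}$ the same sample path realizes $\Gamma_n=\{\sigma_1<\sigma_{1,2}<\tau_n<\tau_0\}$ (using $\tau_0\ge\bar\sigma_{1,2}$ and the equality of component sums from Proposition \ref{p:equalityofsums}), so Proposition \ref{t:thm1} applies verbatim. On $\{\bar\sigma_{1,2}<\tau<\infty\}$, at time $\bar\sigma_{1,2}$ one has $\bar X(1)+\bar X(2)=0$, i.e.\ $Y(1)-Y(2)=n$, and the strong Markov property combined with the explicit formula of Proposition \ref{p:exactformula} bounds the conditional return probability by $c_5e^{-n(\gamma-\epsilon)}$. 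If you wish to salvage your route you would need to re-glue the stages along stopping times intrinsic to $\bar X$ and supply the tail estimate; the reduction above makes that effort unnecessary.
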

\begin{proof}
Write
\begin{align*}
P_x( \sigma_1 < \sigma_{1,2} < \tau < \infty) 
=P_x( \sigma_1 < \sigma_{1,2} < \bar{\sigma}_{1,2} < \tau < \infty)
+
P_x( \sigma_1 < \sigma_{1,2} < \tau <  \bar{\sigma}_{1,2}).
\end{align*}
The definitions of $X$ and $\bar{X}$ imply 
$\tau_0 \ge  \bar{\sigma}_{1,2}$. Then, if a sample path $\omega$ satisfies
$\sigma_1(\omega) < \sigma_{1,2}(\omega) < \tau(\omega) < \bar{\sigma}_{1,2}$, it must 
also satisfy $\sigma_1(\omega) < \sigma_{1,2}(\omega) < \tau_n(\omega) < \tau_0(\omega)$.
This and Proposition \ref{t:thm1} imply that there is an $N$ such that
\[
P_{x_n}( \sigma_1 < \sigma_{1,2} < \tau <  
\bar{\sigma}_{1,2}) \le e^{-n(\gamma -\epsilon)},
\]
for $n > N$.
On the other hand, Proposition \ref{p:exactformula}
and the Markov property of $\bar{X}$ imply
\[
P_{x_n}( \sigma_1 < \sigma_{1,2} < \bar{\sigma}_{1,2} < \tau < \infty) \le
c_5 e^{-n(\gamma - \epsilon)},
\]
for some constant $c_5 > 0$.
These imply \eqref{e:artikforXbar}. 
\end{proof}

\begin{proof}[Proof of Proposition \ref{t:guzel}]
Decompose $P_x( \tau_n < \tau_0)$ and $P_x( \bar{\tau} < \infty)$ as follows:
\begin{align}
P_{x_n} ( \tau_n < \tau_0) &= \label{e:dectaun} P_{x_n}( \tau_n < \sigma_1 < \tau_0)+
 P_{x_n}(\sigma_1 < \tau_n \le  \sigma_{1,2} \wedge \tau_0)\\
&~~~~~~~~~~~~~~~~~~~+ P_{x_n}( \sigma_1 < \sigma_{1,2} < \tau_n < \tau_0)\notag\\
P_{x_n}( \tau < \infty) &= \label{e:dectn} 
P_{x_n}( \tau < \sigma_1 ) + P_{x_n}(\sigma_1 < \tau < \sigma_{1,2} )
+ P_{x_n}( \sigma_1 < \sigma_{1,2} < \tau < \infty). 
\end{align}
By definition $X$ and $\bar{X}$ are identical until they hit $\partial_1$;
therefore $\{\tau_n < \sigma_1\} = \{\tau < \sigma_1\}$ and 
\begin{equation}\label{e:equalityoffirstterms}
P_{x_n}(\tau_n < \sigma_1) = P_{x_n}( \tau < \sigma_1).
\end{equation}

The processes $X$ and $\bar{X}$ begin to differ after they hit $\partial_1$;
but Proposition \ref{p:equalityofsums} says that the sums of their components
remain equal before time $\sigma_{1,2}$; this implies $\tau = \tau_n$ on
$\tau_n \le \sigma_{1,2}$ and therefore
\[
 P_{x_n}(\sigma_1 < \tau \le \sigma_{1,2} )=
P_{x_n}(\sigma_1 < \tau_n \le  \sigma_{1,2} \wedge \tau_0)
\]
This \eqref{e:equalityoffirstterms} and the decompositions \eqref{e:dectaun} and
\eqref{e:dectn} imply
\begin{align*}
|P_{x_n} ( \tau_n < \tau_0) - P_{x_n}( \tau < \infty) |
=
| P_{x_n}( \sigma_1 < \sigma_{1,2} < \tau_n < \tau_0)-
P_{x_n}( \sigma_1 < \sigma_{1,2} < \tau < \infty)|
\end{align*}
By Propositions \ref{t:thm1} and \ref{p:artikforXbar} for $\epsilon > 0$
arbitrarily small the right side of
the last equality is bounded above by $e^{- n(\gamma-\epsilon)}$
when $n$ is large.
On the other hand, Proposition \ref{p:ldfortandem} says for $\epsilon_0 > 0$
arbitrarily small
$P_{x_n}( \tau_n < \tau_0) \ge e^{-n(\gamma_1 +\epsilon_0)}$ for $n$ large
where $\gamma_1 \doteq V(x) < \gamma$.
Choose $\epsilon$ and $\epsilon_0$ to satisfy
\[
\gamma -\gamma_1 > \epsilon+ \epsilon_0.
\]
These imply that 
for $c_6= (\epsilon + \epsilon_0)+ \gamma_1 -\gamma < 0$
\[
\frac{|P_{x_n} ( \tau_n < \tau_0) - P_{x_n}( \tau < \infty) |}
{|P_{x_n} ( \tau_n < \tau_0)|} < e^{c_6 n}
\]
when $n$ is large; this is what we have set out to prove.
\end{proof}

It is possible to generalize Proposition \ref{t:guzel} in many directions.
In particular, one expects it to hold for any tandem walk of finite dimension 
with the same exit boundary; the proof will almost be identical
but requires a generalization of Proposition \ref{p:ldfortandem}, which, we
believe, will involve the same ideas given in its proof. We leave this
task to a future work.

\section{Numerical Example}\label{s:examples}
Proposition \ref{t:guzel} says that for $x \in {\mathbb R}_+^2$ and $x_n =
\lfloor nx \rfloor$, 
the relative error
\[
\frac{|W^*(T_n(x_n)) - P_{x_n}( \tau_n < \tau_0)|}{P_{x_n}(\tau_n < \tau_0)}
\]
decays exponentially in $n$. Let us see numerically how well this approximation works.
Set $\mu_1 = 0.4$, $\mu_2 = 0.5$, $\lambda = 0.1$ and $n=60$. 
In two dimensions, one can
quickly compute $P_{x_n}(\tau_n < \tau_0)$ by numerically iterating \eqref{e:linear}
and using the boundary conditions $V_{\partial A_n} = 1$ and $V(0) = 0$; we will 
call the result of this computation ``exact.''  Because both $W^*(T_n(x_n))$ 
and $P_x(\tau_n <\tau_0)$ decay exponentially in $n$, it is visually simpler to compare
\begin{equation}\label{e:defVn}
V_n \doteq -\frac{1}{n} \log P_x( \tau_n < \tau_0),\text{ and }
W_n \doteq -\frac{1}{n} \log W^*(T_n(x_n)).
\end{equation}

\ninsepsc{relativeerror3}{On the left: level curves of $V_n$ (thin blue) and
$W_n$ (thick red); on the right: the graph of $(W_n-V_n)/W_n$ }{0.7}

The first graph in Figure \ref{f:relativeerror3} are the level curves
of $W_n$ of $V_n$; they all completely overlap except for the first one along the
$x(2)$ axis. The second graph shows the relative error $(W_n-V_n)/V_n$; we see that
it appears to be zero except for a narrow layer around $0$ where it is bounded by
$0.02$. 

For $x=(1,0)$,
the exact value for the
probability $P_x(\tau_{60} < \tau_0)$ is $1.1285 \cdot 10^{-35}$ and the approximate
value given by $W^*(T_n(x))$ equals $1.2037 \cdot 10^{-35}$.
Slightly away from the origin these quantities quickly converge to each other. For
example,
$P_x(\tau_{60} < \tau_0) = 4.8364 \cdot 10^{-35}$,
$W^*(T_n(x)) = 4.8148 \cdot 10^{-35}$ for $x=(2,0)$ and 
$P_x(\tau_{60} < \tau_0) = 7.8888 \cdot 10^{-31}$,
$W^*(T_n(x)) = 7.8885 \cdot 10^{-31}$ for $x = (9,0).$

\section{Literature Review}\label{s:review}
There is a vast literature related to the analysis presented in this
article.
Below we review a number of related works
and point out the connections between them and the present work.

There is a clear correspondence between the 
structures which appear in
the LD analysis and the subsolution approach to IS estimation of $p_n$
of \cite{thesis,DSW, dupuis2007subsolutions,sezer-asymptotically,yeniDW}
and those involved in the methods developed in this paper.
This connection is best expressed in the following equation (in
the context of two tandem walk just studied):
For $q=(q_1,q_2) \in {\mathbb R}^2$
set $\beta = e^{q_1}$ and $\alpha = e^{q_1-q_2}$; then
\[
H(q) = -\log ({\bm p}(\beta,\alpha)),
\]
where ${\bm p}$ is the characteristic polynomial 
defined in \eqref{e:hamiltonian}. A similar relation exists
between $H_2$ and ${\bm p}_2.$
In the LD analysis $H$ and $H_2$ appear as two of the
Hamiltonians of the limit deterministic continuous time
control problem; the gradient of the
limit value function lies on their zero level sets.
Parallel to our construction in subsection \ref{ss:asingleterm},
the articles using the subsolution approach 
construct subsolutions to a limit HJB equation using points on or inside the $0$ level
curve of the hamiltonians $H$ and $H_2$ or their intersection; for example, the gradient ${\bm r}_1$,
defined following display \eqref{e:subsolpa}.
lies exactly on this intersection and corresponds to the point $(\rho_1,\rho_1)$ 
lying on ${\mathcal H} \cap {\mathcal H}_2$; an example from prior work is given in
\cite[Figure 9]{DSW},
the point $r_1$ lying on the intersection of the $0$ level sets of the Hamiltonians $H$ and $H_2$ correspond
again
to the point $(\rho_1,\rho_1)$ lying on ${\mathcal H} \cap {\mathcal H}_2$
identified in subsection \ref{ss:asingleterm}). 
These works use subsolutions to estimate variances of
IS estimators (again based on the same subsolution) for  buffer overflow 
probabilities of the
form $P_x(\tau_n < \tau_0)$ and concentrate on the initial point $x=0$.
Concentrating on the initial points $x=0$ allows great
flexibility on the choice of the exit boundary 
$\partial A_n.$

In the present work we have studied the probability $P_x(\tau_n < \tau_0)$, 
which is a natural quantity to study if one is interested in the buffer 
overflow events of a queueing system.
Many other 
quantities naturally come up in the analysis of buffer overflows.
The work \cite{collingwood2011networks}, studies conditional
probabilities of the following form:
\begin{equation}\label{e:reversed}
P_0
( \sup_{t \in [0,T]} |X_{\lfloor nt\rfloor}/n - {\bm x}(t)| \le \epsilon
| \tau_{(n,0)} < \tau_0),
\end{equation}
where $X$ is the embedded random walk of a Jackson network with increments
$(0,1)$, 
$(-1,1)$,
$(1,-1)$,
$(0,-1)$ and $\tau_{(n,0)}$ is the first
time $X$ hits the point $(n,0)$ and ${\bm x}(\cdot)$ is a limit
deterministic process
to be computed; i.e., in studying
\eqref{e:reversed} one is interested in the behavior
of the queueing process conditioned on the rare event
$\{\tau_{n,0} < \tau_0 \}.$ 
The key idea in \cite{collingwood2011networks} and many
other works studying overflow events in queueing systems (see, e.g., the
list of references in \cite{collingwood2011networks})
is that if one chooses ${\bm x}$ in \eqref{e:reversed} to be the fluid 
limit of the time reversed process of $X$, the above conditional 
probabilities converge to $1$ (see \cite[Theorem 2]{collingwood2011networks}).
Then, in this line of analysis, the key steps are the computation
of the dynamics of the reversed process and its fluid limit. 
In computing these one needs the stationary
distribution of the $X$ process; an 
approximation of the stationary distribution is needed 
in cases when it is not
known exactly. The work \cite{collingwood2011networks} considers a modification
of the above two dimensional system, for which the stationary distribution
is not known and constructs approximations of its stationary distribution
of the form $\pi(x_k,y_k) \approx \frac{C}{1-\rho_1} L(y_n) \psi_0(x_k,y_k)$
as $x_k,y_k\rightarrow \infty$
where $C$ is a constant and $L$ is a harmonic function, 
not known explicitly but
its existence is guaranteed by results in \cite{foley2012constructing};
$\psi_0$
is constructed explicitly using harmonic functions of the form
$e^{\theta_1^* x + \theta_2 y}$ of the unconstrained version of
the random walk of interest.

The work \cite{mcdonald1999asymptotics} 
considers 
the buffer overflow of a chosen node 
in a given stable network.
The process considered in \cite{mcdonald1999asymptotics} is
$r+m$ dimensional: the first dimension represents the node whose
overflow event is to be studied, the dimensions $2,3,..,r$ represent
nodes that become unstable when the first node overflows, and the $m$
dimensions $r+1$,...,$r+m$, represent the ``super-stable'' nodes.
The analysis of \cite{mcdonald1999asymptotics} is based on the
$h$-transform of the embedded random walk of the queueing
system with the modification that its 
constraints are removed for the non super-stable dimensions,
i.e., the first $r$ dimensions (this process is denoted $W^{\infty}$),
the $h$ function is an harmonic function of the $W^\infty$ process
and is taken to be of the form $e^{\alpha x_1}a(x_2,...,x_{r+m})$;
\cite{mcdonald1999asymptotics}
gives conditions under which such
an $h$ function exists based on results from \cite{ney1987markov}.
For $n > 0$,
let $\tau_n$ be the
first time the first component of $W$ hits $n$, i.e, $\tau_n = \inf\{k: W(k) \in F_n\}$,
$F_n = \{x \in {\mathbb Z}_+^{r+m}: x_1 \ge n \}$;
let $\tau_0$ denote the first time $W$ hits the origin ${\bm 0}$. Finally, let $\tau_\triangle$ denote the first
time after time $0$, one of the nodes from $1$ to $r$ hits $0$, i.e., $\tau_\triangle = \inf\{k: k > 0, W \in {\small \triangle} \}$, 
where $\triangle = \{x: x_j = 0, \text{ for some 
}, j \in \{1,2,3,...,r\}\}$ is the constraining boundary of the state space for the components $1$ to $r$; remember
that these are the nodes that are assumed to become unstable when the first component overflows.
As an intermediate step in its analysis, \cite{mcdonald1999asymptotics} derives the following approximation
result: 
let $\pi_{\triangle}$ denote the
stationary measure conditioned on $\triangle$ 
and ${\mathbb E}_{\pi_\triangle}$ denote
expectation conditioned on $W(0)$ having initial distribution 
$\pi_{\triangle}$.
Let ${\bm \tau}_{\bm 0}$ be the first return time to ${\bm 0}$, i.e., ${\bm \tau}_{\bm 0} = \inf\{k > 0: W_k = {\bm 0}\}.$
\cite[Lemma 1.8]{mcdonald1999asymptotics} 
states,
under the assumptions made in the paper, 
\begin{equation}\label{e:approx}
\lim_{n\rightarrow \infty} \frac{|
 \pi({\bm 0})P_{{\bm 0}}(\tau_n < {\bm \tau}_{\bm 0}) - \pi(\Delta) P_{\pi_{\Delta}}(\tau_n < \tau_\triangle )|}{
 \pi({\bm 0})P_{{\bm 0}}(\tau_n < {\bm \tau}_{\bm 0})} = 0.
\end{equation}
\cite{mcdonald1999asymptotics} develops the following representation for 
 $\pi(\Delta) P_{\pi_{\Delta}}(\tau_n < \tau_\triangle )$:
\begin{equation}\label{e:formulaforbn}
 \pi(\Delta) P_{\pi_{\Delta}}(\tau_n < \tau_\triangle ) = e^{-\alpha n} {\mathbb E}_{\pi_\Delta}[h(W(1))\Psi(W(1))],
\end{equation}
$\Psi$ is defined as follows:
\begin{equation}\label{e:defpsi}
\Psi(x) = 
{\mathbb E}_x
[\hat{a}^{-1}(\hat{\mathscr W}^\infty(\tau_n)) 
e^{-\alpha ({\mathscr W}^\infty(\tau_n)-n)} 1_{\{\tau_n < \tau_{\blacktriangle}\}}],
\end{equation}
where, $\blacktriangle = \{x \in {\mathbb Z}^r_+ \times {\mathbb Z}^{m}, x_j \le 0, j \in \{1,2,3,...,r\}\}$,
${\mathscr W}^\infty$ is the $h$-transform of the process $W^\infty$.
For the computation of the expectation part of the formula \eqref{e:formulaforbn}, \cite{mcdonald1999asymptotics} suggests
simulation.
The seven conditions (see \cite[page 113, introduction]{mcdonald1999asymptotics}) 
that \cite{mcdonald1999asymptotics} is based on are conditions on the twisted
process, the stationary distribution $\varphi$ of its last $m$ components and on the stationary distribution $\pi$ of
the original process.
\cite[Section 3]{mcdonald1999asymptotics} treats the two dimensional 
constrained random walk
on ${\mathbb Z}_+^2$ with increments $(-1,0)$, $(1,0)$, $ (0,-1)$, 
$(0,1)$, $(1,1)$; for this process \cite{mcdonald1999asymptotics} 
constructs explicitly
an $h$ function of the form $h(x) =a_1^{x_1} a_2^{x_2}$, 
where $(a_1,a_2) \in {\mathbb R}^2$ is a point on a curve whose
definition is analogous to the definition of the characteristic surface 
${\mathcal H}.$

The work \cite{miyazawa2009tail} employs the
ideas of removing constraints on one of the boundaries and using points on curves associated with the resulting process to study
the tail asymptotics of the stationary distribution of a two dimensional nearest neigbor random walk ${\bm L}$ constrained
to remain in ${\mathbb Z}_+^2.$ To study the asymptotic decay rate of ${\bm \nu}(n,k)$ in $n$ for a fixed $k$, \cite{miyazawa2009tail}
considers the random walk ${\bm L}^{(1)}$, which has the same dynamics as  ${\bm L}$ except that it is not constrained on the vertical axis.
Associated with this process, \cite{miyazawa2009tail} defines two curves, whose definitions are parallel to the definition
of ${\mathcal H}$ and ${\mathcal H}_1$ (see the definition of ${\mathscr D}_1$ on \cite[page 554]{miyazawa2009tail}) and uses
points on and inside these curves to define solutions to an 
eigenvalue / eigenvector problem associated with the problem
(see \cite[Theorem 3.1]{miyazawa2009tail}); for the study of tail asymptotics along the vertical axis, \cite{miyazawa2009tail}
uses the same analysis but this time removing the constraint on the horizontal axis. For further works along this line
of research we refer the reader to \cite{kobayashi2013revisiting,dai2011reflecting,miyazawa2011light}.

The work \cite{ignatiouk2000large} develops an explicit formula for the large deviation local rate function $L(x,v)$ 
of a general Jackson network, starting from representations of these rates as limits derived in \cite{dupuis1995large,atar1999large}. 
For this, \cite{ignatiouk2000large} employs ``free processes;'' 
these are versions of the original process obtained 
by removing those constraints from the original process
that are not involved in a given direction $v$ at a given point $x \in {\mathbb R}_+^d.$ The proofs in \cite{ignatiouk2000large}
use fluid limits for the free process under a change of measure (i.e., a twisted/h-transformed version of the free process); 
the changes of measures used here correspond to using
$h$-functions of the form $e^{\langle \theta, x \rangle}$ where $\theta$ 
is a point on a characteristic surface (analogous to ${\mathcal H}$ in this work or $H$ in 
\cite{DSW}) associated with the process being transformed (see \cite[Section 6]{ignatiouk2000large}). As an application
of its results, \cite{ignatiouk2000large} computes the limit
$\lim_{n\rightarrow \infty} \frac{1}{n}\log {\mathbb E}_0[\tau_n]$
by noting from \cite{parekh1989quick} that this limit equals
\[
\lim_{n\rightarrow \infty} -\frac{1}{n}\log P_0( \tau_n < \tau_0),
\]
which is the LD decay rate
of the probability we have studied in this paper for general stable Jackson networks; \cite{ignatiouk2000large}
derives the explicit formula $\min_{1 \le i \le d} -\log(\rho_i)$ for the above LD rate using the explicit local rate functions
developed in the same work and the explicit formulas available for the stationary distribution of the underlying process.

The Martin boundary of an unstable process is a characterization of the 
directions through which the process may diverge to $\infty.$
The idea of using points on characteristic surfaces, and the idea of removing constraints from the process to simplify
analysis, appear also in works devoted to identifying Martin boundaries of constrained or stopped processes. An example is
\cite{ignatiouk2010martin}, which identifies the Martin boundary of two dimensional random walks
in ${\mathbb Z}_+^2$ and which are stopped as soon as they hit the boundary of ${\mathbb Z}_+^2$. This work
breaks up its analysis into three cases:
1)the directions $q \in {\mathbb R}_+^2$, where both components of $q$ are nonzero, 2) the directions $q$ such that
$q(1) = 0$, and 3) directions such that $q(2) = 0$. For each of these cases, \cite{ignatiouk2010martin} work with
what it calls {\em local processes}; the local process for the first case is a completely unconstrained random walk,
the local process for the second case is a process keeping the horizontal axis (i.e., the vertical boundary
is removed) and the third case is the reverse of the last. \cite{ignatiouk2010martin} uses LD analysis of the
local processes, harmonic functions of the form
\[
h_a(x) = \begin{cases} x_1 e^{\langle a, x\rangle} - {\mathbb E}_x[S_1(\tau) e^{\langle a,x\rangle}1_{\{\tau <\infty\}}], &\text{ if }  q(a) = (0,1),\\
x_2 e^{\langle a, x\rangle} - {\mathbb E}_x[S_2(\tau) e^{\langle a,x\rangle}1_{\{\tau <\infty\}}], &\text{ if }  q(a) = (1,0),\\
e^{\langle a, x\rangle} - {\mathbb E}_x[e^{\langle a,x\rangle}1_{\{\tau <\infty\}}], &\text{ otherwise.}
\end{cases}
\]
where $S$ is the underlying process, $\tau$ is the first hitting time to the boundary of ${\mathbb Z}_+^2$,
$a$ is a given point on a surface associated with $S$ (defined analgous to ${\mathcal H}$), 
$q(a)$ is the mean direction of $S$ under an exponential
change of measure defined by $a$ (see \cite[page 1108]{ignatiouk2010martin}.
In this connection let us also cite \cite{kurkova1998martin}, which uses geometry and complex analysis to identify the Martin boundary
of random walks on ${\mathbb Z}^2$, ${\mathbb Z} \times {\mathbb Z}_+$ and ${\mathbb Z}_+^2.$

Let $X$ be the constrained random walk in ${\mathbb Z}_+^2$ with increments $(1,0)$, $(-1,0)$, $(0,1)$, and $(0,-1)$
and let $\tau_n$ be as in \eqref{d:taun}.
A classical problem in computer science going back to 
\cite[section 2.2.2, exercise 13]{knuth1972art} is the analysis of the following expectation:
\begin{equation}\label{e:expectedexitpoint}
{\mathbb E}
\left[ \max(X_1(\tau_n),X_2(\tau_n))
\right],
\end{equation}
i.e., the expected size of the 
longest queue at the time of buffer overflow.  
This expectation is computed
in \cite{knuth1972art} for the case
$P(I_k=(1,0)) =P(I_k=(0,1)) = 1/2$, $P(I_k=(-1,0)) =P(I_k=(0,-1)) = 0$.
Various versions of this problem has since
been treated in 
\cite{yao1981analysis,flajolet1986evolution, 
maier1991colliding, comets2009large, louchard1994random, 
guillotin2006dynamic}. 
\cite{maier1991colliding} treats a generalization of this problem where the dynamics of the random walk
depend on its position; the approach of \cite{maier1991colliding} uses large deviations techniques from \cite{WF}.
\cite{yao1981analysis}
treats the approximation of \eqref{e:expectedexitpoint} for the case when the increments
have a symmetric distribution as follows: 
$P(I_k =(1,0)) = P(I_k(0,1)) = (1-p)/2$ and
$P(I_k =(-1,0)) = P(I_k(0,-1)) = p/2$; furthermore $p < 1/2$ is assumed, i.e., the process
is assumed unstable. Under these assumptions, \cite{yao1981analysis}
develops an approximation for the expectation in \eqref{e:expectedexitpoint}
as $n\rightarrow \infty.$ The main idea in \cite{yao1981analysis} is the
following: under the assumptions of the paper one can ignore both
of the constraining boundaries of the process, to prove this the author
uses LD bounds on iid Bernoulli sequences 
(see \cite[Lemma 3]{yao1981analysis}). Then an explicit
computation for the unconstrained process using elementary
techniques gives the desired approximation.

\section{Conclusion}\label{s:conclusion}
In this section
we point out several implications of our results, work in progress and possible extensions.
\subsection{The case $\mu_1 = \mu_2$}\label{ss:equal}
The formula \eqref{d:Wstar} 
for $P_y(\tau < \infty)$
(derived in Proposition \ref{p:exactformula})
requires $\mu_1 \neq \mu_2.$ The case $\mu_1 = \mu_2$ can be
handled by letting $\mu_2 \rightarrow \mu_1$ in \eqref{d:Wstar}; this
gives
\[
P_y(\tau < \infty) = \rho^{y(1)-y(2)} + \frac{\mu-\lambda}{\mu}\rho^{y(1)}
(y(1)-y(2)),
\]
where $\rho = \lambda / \mu$ and $\mu_1 = \mu_2 = \mu.$
Note that the case $\mu_1 = \mu_2$ leads to the linear term $y(1) -y(2)$.
\subsection[Constrained diffusions with drift]{Constrained diffusions with drift and
elliptic equations with Neumann boundary conditions}\label{ss:diffusionswithdrift}
Diffusion processes are weak limits of random walks. Thus, the results of
the previous sections can be used to compute/approximate Balayage and 
exit probabilities of constrained unstable diffusions. We give an example
demonstrating this possibility.

For $a,b > 0$
let $X$ be the the constrained diffusion on ${\mathbb R} \times {\mathbb R}_+$
with infinitesimal generator $L$ defined as
\[ f \rightarrow Lf,
Lf = \langle \nabla f,  ((2a+b), (a-b) ) \rangle + \frac{1}{6} \nabla^2 f \cdot  \left( \begin{matrix} 	
		2 & 1 \\
		1 & 2
\end{matrix}
\right),
\]
where $\nabla^2$
denotes the Hessian operator, mapping $f$ to its matrix of second order
partial derivatives.
On $\{x: x(2) = 0\}$ $X$ is pushed up to remain in ${\mathbb R} \times {\mathbb R}_+$
(the precise definition involves the Skorokhod map, see, e.g., \cite{kushner2001numerical}).
$a,b > 0$ implies that, starting from $B = \{x: x(1) > x(2)\}$, $X$ has positive probability
of never hitting $\partial B = \{x:x(1) = x(2)\}.$
Let $\tau$ be the first time $X$ hits $\{x:x(1) = x(2)\}$.
Proposition \ref{p:exactformula} for $d=2$ suggests
\begin{align}\label{e:exactforumlafordiffusions}
P_x(\tau < \infty) &= 
e^{-(a + 2b) 3(x(1)-x(2))} + \notag
\frac{a+2b}{a-b} e^{-(a + 2b) 3(x(1)-x(2))} e^{-(2a + b) 3 x(2)}\\
&~~-\frac{a+2b}{a-b}e^{-3(2a+b) x(1)},
x \in B.
\end{align}
One can check directly that the right side of the last display satisfies
\[
L V= 0,~~ \langle \nabla V, (0,1) \rangle = 0, x \in \partial_2.
\]
This and a verification argument similar to the proof of 
Proposition
\ref{p:balayagesimple} will imply \eqref{e:exactforumlafordiffusions}.
\subsection{General Jackson networks}
\paragraph{Multiple approximations}
We have seen with Proposition \ref{t:guzel} 
that $P_{y_n}(\tau < \infty)$
approximates $P_{x_n}(\tau_n < \tau_0)$ ,$x_n = \lfloor nx \rfloor$
very well (i.e., with exponentially decaying relative error)
for all $x \in A \doteq \{ x \in {\mathbb R}_+^2, 0 < x(1) + x(2) < 1\}$ when $n$ is large. 
When $X$ is the constrained random walk associated with a general two dimensional
Jackson network, this will not be true in general and to get a good approximation across all $A$ we will have to use the
transformation $T_n^2(x) = (x(1),(n-x(2)))$ as well as $T_n.$ $T_n^2$ moves the origin of the coordinate
system to the corner $(0,n)$ of $\partial A_n.$ 
Thus, for general  two dimensional $X$, 
we will have to construct two limit processes
$Y^1$ and $Y^2$; $Y^1$ will be as above and $Y^2$ will be the limit of
$Y^{2,n} \doteq T_n^2(X)$; the limit probability will be, as before $P_y( \tau^2 < \infty)$ where
$\tau^2$ is the first time $Y^2$ hits $\partial B.$ 
In $d$, dimensions we will
have $d$ possible limit processes, 
one for each corner of $\partial A_n$ providing precise approximations
for initial points which lie away from the boundaries missing
in the limit problem.
For a numerical
example see 
subsection 8.2 of the preprint \cite{sezer2015exit}.
One work in progress, based on the approach of
Section \ref{s:convergence2}, gives details of these ideas
in the context of Jackson networks consisting of parallel queues.
The same work also considers the approxmation of the expectation
\eqref{e:expectedexitpoint} using the techniques of the present work.

\paragraph{Approximation of $P_y(\tau < \infty)$ in general}
Second issue is the generalization of the
computation of the limit probability $P_y(\tau < \infty).$ 
As we have seen
in Proposition \ref{p:exactformula}, in the case of two tandem queues,
it is possible to compute this probability exactly as the superposition of two $Y$-harmonic functions:
$[(\rho_1,\rho_1),\cdot]$ and $h_{\rho_2}$. For general two dimensional
Jackson networks,  superposition of these two
functions will only give an approximation of $P_y(\tau < \infty)$; to construct
better approxmations one will proceed as indicated in Remark \ref{r:r2} and
use a linear combination of finite number of functions 
in the class of $Y$-harmonic functions constructed in 
subsections \ref{ss:asingleterm} and \ref{ss:twoterms} to approximate the constant function
$1$ on the boundary $\partial B$;
the error made in this approximation on $\partial B$ 
will provide
an upperbound for the error made in the approximation of 
$P_y(\tau < \infty)$ for any $ y\in B$. 
The numerical example in
\cite[subsection 8.2]{sezer2015exit} also demonstrates this point.

\paragraph{$\partial B$-determined $Y$-harmonic functions}
In the above paragraph we have noted that in general,
to construct improved approximations of $P_y(\tau < \infty)$,
 we will need to use further $Y$-harmonic functions 
of the form
\[
h_\beta = \beta^{y(1)-y(2)} \left(  C(\beta,\alpha_2) \alpha_1^{y(2)} - C(\beta,\alpha_1) \alpha_2^{y(2)} \right)
\]
where $(\beta,\alpha_1)$ and $(\beta,\alpha_2)$ are conjagate and
$\Delta(\beta) \neq 0$. We know by Proposition \ref{p:balayagesimple} that $h_\beta$
is $\partial B$-determined, if $|\alpha_1|,|\alpha_2| \le 1$ and ,$|\beta| < 1.$
Suppose we fix $\alpha \in \{z \in {\mathbb C}, |z|=1\}$  and compute $\beta$ and $\alpha^*$ so
that $(\beta,\alpha)$ and $(\beta,\alpha^*)$ are conjugate ($\beta$ and $\alpha^*$ are computed by
solving the characteristic equation ${\bm p}=1$). In view of Proposition \ref{p:balayagesimple},
and in view of the fact that $h_\beta$ will be used in the approximation of a $\partial B$-determined
$Y$-harmonic function,
a natural question is the following:
under what conditions on the parameters of the model do $|\alpha^*| \le 1$ and $|\beta| < 1$
hold? This problem is studied for the general two dimensional
Jackson network in Section 4 of \cite{sezer2015exit} (in particular, see Proposition 4.12 and Proposition 4.13).
These propositions require simplifying conditions on the system parameters (e.g., see \cite[condition (56), page 18]{sezer2015exit}).
Derivation of more precise conditions remains an open problem.

\paragraph{Harmonic systems}
In subsection \ref{ss:graphs} we have pointed out that the
classes of $Y$-harmonic functions constructed in 
subsections \ref{ss:asingleterm}
and \ref{ss:twoterms} have graph representations, as shown in
Figure \ref{f:graphsharmtex}; we refer to these graphs and the system of equations
they represent as ``harmonic systems.'' 
It is possible to generalize these graphs
to walks in $d$ dimensions and corresponding to each solution to the
system of equations represented by the graph one can define
a $Y$-harmonic function; this is done in the preprint
\cite[Section 5]{sezer2015exit} (see Definitions 5.1 and 5.2, Proposition
5.2, generalizing Proposition \ref{p:harmonicYtwoterms2d},
Proposition 5.3 generalizing Proposition \ref{p:balayagesimple}).
\paragraph{$d$-tandem queues}
Remarkably, it turns out to be possible to define a class
of harmonic systems and explicitly solve them
to generalize the formula \eqref{d:Wstar} for $P_y(\tau < \infty)$ to 
$d$ tandem queues. This is done in Section 6 of \cite{sezer2015exit}.
As an example, let us consider $d=3$. To compute $P(\tau < \infty)$,
one uses, in addition to the graphs given in Figure \ref{f:graphsharmtex},
the graph given in Figure \ref{f:G3tex}.

\begin{figure}[h]
\begin{center}
\scalebox{0.8}{
\centerline{\input{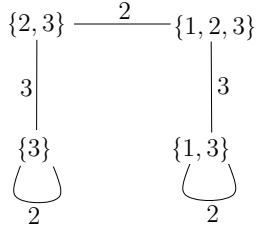}}}
\end{center}

\vspace{-0.65cm}
\caption{\hspace{0.25cm}An harmonic system for $d=3$\label{f:G3tex}}
\end{figure}

Proposition 6.3 of \cite{sezer2015exit} implies that, for 
\begin{equation}\label{e:nonequalmu}
\mu_i \neq \mu_j,
i,j \in \{1,2,3\},
\end{equation}
the following function solves the harmonic system given in
Figure \ref{f:G3tex}:
\begin{equation}\label{e:h3d}
h_{\rho_3}(y) = 
\rho_3^{y(1)-(y(2)+y(3))}\left(
1 - c_3 \rho_2^{y(3)} -c_3  c_{1} \rho_1^{y(2)} \rho_1^{y(3)} 
+c_3 c_{2} \rho_1^{y(2)} \rho_2^{y(3)}\right)
\end{equation}
where
\[
c_2 = \frac{\mu_2 - \lambda}{\mu_2 - \mu_1}, ~~ c_3 = \frac{\mu_3 - \lambda}{\mu_3 - \mu_2},
~~ c_1 = \frac{\mu_3 - \lambda}{\mu_3 - \mu_1}.
\]
The $Y$ process for the $3$-tandem queues is a random walk
on ${\mathbb Z}\times {\mathbb Z}_+^2$ with increments
$(-1,0,0)$, $(1,1,0)$, $(0,-1,1)$ and $(0,0,-1)$. $h$ of \eqref{e:h3d}
is a $Y$-harmonic function.
There are four terms in the sum \eqref{e:h3d} defining $h_{\rho_3}$, each of these
terms corresponds to a node of the graph in Figure \ref{f:G3tex}. None of
them is $Y$-harmonic individually. But the particular
linear combination in \eqref{e:h3d} is indeed $Y$-harmonic. Two further
$Y$-harmonic functions used in the calculation of $P_y(\tau < \infty)$ are
\[
h_{\rho_2} = \rho_2^{y(1)-(y(2)+y(3))} \left( \rho_2^{y(3)} -c_{2} \rho_1^{y(2)} \rho_2^{y(3)}\right), ~~~
h_{\rho_1} = \rho_1^{y(1)-(y(2)+y(3))} \rho_1^{y(2)} \rho_1^{y(3)};
\]
the harmonic systems for these functions are ``edge-completions'' of those
given in Figure \ref{f:graphsharmtex} (see Definition 5.4 of \cite{sezer2015exit}).
The exact formula for $P_y(\tau < \infty)$ for $y \in {\mathbb Z} \times {\mathbb Z}_+^2$, 
$y(1) \ge y(2) + y(3)$ is given in \cite[Proposition 6.5]{sezer2015exit} as
\[
P_y(\tau < \infty) =  h_{\rho_3} + c_3 h_{\rho_2} +  c_1 c_3 h_{\rho_1}.
\]
To treat the case when \eqref{e:nonequalmu} doesn't hold it suffices
to take limits in the last formula, which leads to polynomial
terms in $y$.


\subsection{Extension to other processes and domains}\label{ss:possiblegen}
In the foregoing sections, we have
approximated $P_x(\tau_n < \tau_0)$ 
in two stages: 1) use an affine
change of coordinates to move the origin to a point on the exit boundary
and take limits;
as a result, some of the constraints in the prelimit
process disappear and one obtains as a limit process an unstable constrained random walk and
as a limit problem the probability of return $P_y(\tau < \infty)$ of the unstable process;
2) find a class of basis functions on the exit boundary on which the 
Balayage operator of the limit process has a simple action; then 
try to approximate the function $1$ (i.e., the value of $P_y(\tau < \infty)$ on the exit boundary)
on the exit boundary with linear
combinations of the functions in the basis class.
The type of problem we have studied here is of the following form:
there is a process $X$ with a certain law of large number limit which 
takes $X$ away from a boundary $\partial A_n$ towards a stable point 
or a region; $\tau_0$ is the first time the process gets into this
stable region. We are interested in the probability $P(\tau_n < \tau_0)$.
We expect the first step to be 
applicable to a range of problems that fit into this scenario.
The second stage obviously depends on the 
particular dynamics of the original 
process. Ongoing research considers two tandem queues with
Markov modulated dynamics; optimal IS simulation for this
process was developed in \cite{sezer2009importance}. For Markov
modulated dynamics, one needs a more general class of $Y$-harmonic functions
than those constructed in Section \ref{s:twodim}
and the resulting equations are of higher degree and harder to analyze
but the main ideas of Section \ref{s:twodim} do generalize.
The present work focused on the exit boundary $\partial A_n$;
another natural exit boundary is $\{y: y(i) \le \lfloor a_i n \rfloor \}$ 
for $a_i > 0$, $i=1,2.$ We expect the analysis of this paper to generalize
to this exit boundary, with the following important modification: for
this boundary, there are three points on the exit boundary
from which one must conduct a limit analysis: the corners $n(0,a_2)$,
$n(0,a_1)$ and $n(a_1,a_2).$ For the last one the limit process will
be the completely unconstrained version of the random walk. Providing the
details of this and
further extensions to other processes and exit boundaries
remain problems for future research.

\bibliography{balayage}

\end{document}